\documentclass[reqno,12pt]{amsart}

\usepackage{amscd,amssymb,comment,epic,eepic,euscript}
\usepackage{graphicx}
\usepackage{enumitem}
\usepackage[initials,alphabetic]{amsrefs}
\usepackage{array}
\usepackage{longtable}

\setlength{\LTpost}{0pt}


\usepackage[usenames,dvipsnames]{xcolor}

\usepackage{color,titletoc}

\renewcommand{\emptyset}{\varnothing}

\linespread{1.0725}

\def\bes{\begin{equation*} }
\def\ees{\end{equation*} }

\def\gS{\mathfrak S}

\def\al{\alpha}
\def\be{\beta}
\def\ga{\gamma}


\headheight=8pt     \topmargin=0pt
\textheight=632pt   \textwidth=432pt
\oddsidemargin=18pt \evensidemargin=18pt


\renewcommand{\thetable}{\arabic{table}}

\theoremstyle{plain}
\newtheorem{thm}{Theorem}[section]
\newtheorem{cor}[thm]{Corollary} 
\newtheorem{lemma}[thm]{Lemma} 
\newtheorem{lem}[thm]{Lemma} 
\newtheorem{prop}[thm]{Proposition}
\newtheorem{conj}[thm]{Conjecture}


\theoremstyle{remark}

\newtheorem{remark}[thm]{Remark}

\theoremstyle{definition}

\DeclareMathOperator{\GL}{GL}
\DeclareMathOperator{\Tr}{Tr}
\DeclareMathOperator{\sign}{sign}

\newcommand{\C}{{\mathbb C}}
\newcommand{\R}{{\mathbb R}}
\newcommand\diag{\mathrm{diag}}

\newcommand\Image{\mathrm{Im}}
\newcommand\lspan{\mathrm{span}\,}
\newcommand{\N}{{\mathbb N}}


\renewcommand{\emptyset}{\varnothing}

\def\bes{\begin{equation*} }
\def\ees{\end{equation*} }

\def\beq{ \begin{equation} }
\def\eeq{ \end{equation} }

\def\bep{\begin{proof}}
\def\eep{\end{proof}}

\def\ben{\begin{enumerate}}
\def\een{\end{enumerate}}

\def\bet{\begin{theorem}}
\def\eet{\end{theorem}}

\def\bel{\begin{lemma}}
\def\eel{\end{lemma}}

\newcommand{\ax}{\langle x\rangle}
\newcommand{\axn}{\langle x_1,\ldots,x_n\rangle}
\newcommand{\csim}{\stackrel{\mathrm{cyc}}{\thicksim}}

\def\al{\alpha}
\def\be{\beta}
\def\ga{\gamma}

\def\de{\delta}


\begin{document}

\title[The Kaplansky-Lvov multilinear conjecture]{Instances of the  Kaplansky-Lvov  multilinear conjecture for polynomials of degree three}

\author[K. Dykema]{Kenneth J. Dykema$^{*}$}
\address{K.D., Department of Mathematics, Texas A\&M University,
College Station, TX 77843-3368, USA}
\email{kdykema@math.tamu.edu}
\thanks{\footnotesize ${}^{*}$Research supported in part by NSF grant DMS--1202660.}
\urladdr{https://www.math.tamu.edu/~ken.dykema/}

\author[I. Klep]{Igor Klep${}^{\dag}$}
\address{I.K., Department of Mathematics, 
The University of Auckland, Private Bag 92019, Auckland 1142, New Zealand}
\email{igor.klep@auckland.ac.nz}
\thanks{${}^\dag$Supported by the Marsden Fund Council of the Royal Society of New Zealand. Partially supported by the Slovenian Research Agency grants P1-0222, L1-4292 and L1-6722. Part of this research was done while the author was on leave from the University of Maribor.}
\urladdr{https://www.math.auckland.ac.nz/~igorklep/}

\subjclass[2010]{Primary 16R99, 16R30; Secondary 16S50, 15A21}
\date{\today}
\keywords{noncommutative polynomial, multilinear polynomial, commutator, trace}

\setcounter{tocdepth}{2}
\contentsmargin{2.55em} 
\dottedcontents{section}[3.8em]{}{2.3em}{.4pc} 
\dottedcontents{subsection}[6.1em]{}{3.2em}{.4pc}

\makeatletter

\begin{abstract}
Given a positive integer $d$, the Kaplansky-Lvov conjecture states that the set of values of a multilinear noncommutative polynomial $f\in \C\axn$ on the matrix algebra $M_d(\C)$ is a vector subspace. In this article
the technique of using one-wiggle families of Sylvester's clock-and-shift matrices is championed 
to  establish the conjecture for polynomials $f$ of degree three 
when $d$ is even or $d<17$.
\end{abstract}

\maketitle

\section{Introduction and the Statement of the Main Result}

Images of noncommutative polynomials play a fundamental role in noncommutative algebra and are a central topic of the theory of polynomial identities \cites{Pro73,Row80}. Another area where these objects play a prominent role is free analysis \cites{Voi10,KVV14},  especially its free real algebraic geometry branch \cite{Hel02}. This recent progress has led to a  surge of interest in images of noncommutative polynomials in matrix rings. A fundamental open problem in this regard is (cf.~\cite{Dn93}):

\begin{conj}[The Kaplansky-Lvov multilinear conjecture]
\label{conj:kap}
Let $f$ be a multilinear polynomial with complex coefficients, and let $d\in\N$. Then
the set of values of $f$ in $M_d(\C)$ is a vector space.
\end{conj}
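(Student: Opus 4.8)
The plan is to extract the two structural features of the value set
$S := \{ f(A_1,\dots,A_n) : A_i \in M_d(\C)\}$ that come for free, and then fight only for the one that does not. First, multilinearity makes $S$ a \emph{cone}: replacing $A_1$ by $\la A_1$ scales the value by $\la$, so $\la S \subseteq S$ for every $\la \in \C$. Second, $f(gA_1g^{-1},\dots,gA_ng^{-1}) = g\,f(A_1,\dots,A_n)\,g^{-1}$, so $S$ is invariant under simultaneous conjugation by $\GL_d(\C)$. Hence $\lspan S$ is a conjugation-invariant subspace of $M_d(\C)$, and since $M_d(\C) = \C I \oplus \slie_d(\C)$ is the decomposition into irreducible adjoint summands (for $d\ge 2$), $\lspan S$ must be one of $\{0\}$, $\C I$, $\slie_d(\C)$, or $M_d(\C)$. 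Which one occurs is read off the trace functional $A \mapsto \Tr f(A_1,\dots,A_n)$: if it vanishes identically then $S \subseteq \slie_d(\C)$, and otherwise $I \in \lspan S$; the case $\lspan S = \{0\}$ is excluded for nonzero $f$ and $d\ge 2$, since a degree-three multilinear polynomial is too short to be a polynomial identity of $M_d(\C)$ (Amitsur--Levitzki). The conjecture thus reduces to the single assertion that $S$ is \emph{closed under addition}, i.e.\ $S = \lspan S$.

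The engine for additive closure is a one-wiggle principle. If two tuples agree in every coordinate but the $j$-th, multilinearity in that slot gives
\[
  f(A_1,\dots,A_j,\dots,A_n) + f(A_1,\dots,B_j,\dots,A_n)
   = f(A_1,\dots,A_j+B_j,\dots,A_n)\in S .
\]
So the sum of two values is automatically a value \emph{provided} the substitutions realizing them can be aligned to differ in a single coordinate. The entire difficulty is therefore to manufacture a spanning family of the target subspace ($\slie_d(\C)$, or $M_d(\C)$ when the trace does not vanish) out of substitutions that are mutually one-wiggle-aligned, so that arbitrary linear combinations of the spanning elements remain in $S$.

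To manufacture such a family I would evaluate $f$ on Sylvester's clock-and-shift matrices. With $\omega = e^{2\pi i/d}$, set $X = \diag(1,\omega,\dots,\omega^{d-1})$ and let $Y$ be the cyclic shift, so that $XY = \omega\,YX$ and the generalized Pauli system $\{X^aY^b : 0\le a,b<d\}$ is a basis of $M_d(\C)$ consisting, for $(a,b)\neq(0,0)$, of trace-zero matrices. Substituting monomials $X^pY^q$ and wiggled perturbations $X^pY^q + t\,X^{p'}Y^{q'}$ into the three slots of a degree-three $f$ turns $f$ into a $\C$-linear combination of Pauli elements whose coefficients are explicit sums of powers of $\omega$; the low degree keeps these combinations manageable. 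For each prescribed target $X^aY^b$ one then solves a linear system in these root-of-unity coefficients to realize the target, arranging that the substitutions come from a common wiggle family so the one-wiggle principle links them. Iterating over $(a,b)$ deposits a full Pauli spanning set into $S$ in an additively compatible way, giving $S = \slie_d(\C)$ (respectively $M_d(\C)$).

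The main obstacle is exactly this solvability-and-alignment step: for each target one must guarantee that the governing root-of-unity system is nondegenerate \emph{and} that the required substitutions can be kept within one-wiggle range of one another. This is where the hypotheses enter. When $d$ is even the extra $2$-torsion among the $\omega^k$ makes the relevant Vandermonde/circulant-type systems invertible uniformly, so the construction runs for all even $d$; for odd $d$ these systems can degenerate, and I would fall back on verifying the finitely many arithmetic conditions by direct computation, which succeeds for $d<17$ but leaves the generic odd case, and a fortiori higher degree, open. I expect the genuine hard part to be ruling out a proper conjugation-invariant cone $S \subsetneq \lspan S$; the clock-and-shift/one-wiggle machinery is precisely a device for converting that geometric obstruction into a tractable finite system in roots of unity.
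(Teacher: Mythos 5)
The statement you were asked to prove is the Kaplansky--Lvov conjecture itself, which is open; the paper does not prove it, and only establishes the special cases recorded in Theorem~\ref{thm:main} (degree three, $d$ even or $d<17$ odd). Your proposal is, in outline, exactly the paper's strategy for those cases: use conjugation-invariance and the classification of Lie ideals of $M_d(\C)$ to reduce to showing the value set equals its span, observe that linearity in a single slot makes the values obtained by varying that slot a subspace (the paper's ``one-wiggle families''), and evaluate on Sylvester's clock-and-shift basis $u^p_q$, where each evaluation of $f$ on a triple of basis elements is a single $u^p_q$ times an explicit linear form in the coefficients $a_\sigma$ with cyclotomic coefficients. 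Your honest flagging of where the argument stops is also accurate.

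As a proof of the stated conjecture, however, it has two gaps you should own explicitly. First, you silently restrict to degree three (``the three slots of a degree-three $f$'', the Amitsur--Levitzki remark); the conjecture concerns multilinear polynomials of arbitrary degree, and nothing in your outline touches degree $\ge 4$. Second, even in degree three the decisive steps are asserted rather than carried out. The claim that for even $d$ ``the relevant Vandermonde/circulant-type systems are invertible uniformly'' is not a proof: in the paper this is a verification that the intersection of finitely many explicit unions of affine subsets of the four-dimensional coefficient space is empty, contingent on an explicit finite list of polynomials having no primitive $d$-th root of unity as a zero, and it already needs a separate patch at $d=4$ because $\omega^2+1$ vanishes at $\omega=i$. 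For odd $d$ the intersection can genuinely fail to be empty --- at $d=3$ it is the single point corresponding to the Amitsur--Levitzki polynomial $S_3$, which the paper must handle by an entirely separate ad hoc construction of Jordan forms (Lemma~\ref{lem:al3}); your scheme of ``solving a linear system in root-of-unity coefficients'' breaks down precisely there. For odd $d\ge 17$ neither your proposal nor the paper has an argument, so even the degree-three case of the conjecture remains open.
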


Conjecture \ref{conj:kap} is stated in \cite{Dn93} for all fields, not just $\C$. We have, however, chosen to present the conjecture only over $\C$ as this is
where
our main interest lies.
Likewise many of the 
results from the literature cited below were proved for large classes of fields, but we shall only state their restrictions to $\C$. Incidentally, by general model theory, all our results presented over $\C$ are valid over
arbitrary algebraically closed fields of characteristic $0$.

If the set of values of a noncommutative polynomial $f$ is a vector subspace of $M_d(\C)$, then 
it is a Lie ideal (see e.g.~\cite{BK09}), and hence either $\{0\}$, $\C\cdot I_d$, $M_d(\C)\cap\ker\Tr=[M_d(\C),M_d(\C)]$ or $M_d(\C)$ by an old result of Herstein. 
Thus noncommutative polynomials can be classified based
on their (span of) values in $M_d(\C)$.
A very special instance of Conjecture \ref{conj:kap} is the case $f=[x,y]=xy-yx$, which is a classical result in matrix theory due to  Shoda \cite{Sho37} (see also Albert-Muckenhoupt \cite{AM57}): every traceless matrix is a commutator.
In \cite{KMR12}  Kanel-Belov, Malev and Rowen established Conjecture \ref{conj:kap} for $d=2$, i.e., for values in $2\times2$ matrices. 
In \cite{Spe13} \v Spenko proves Conjecture \ref{conj:kap} for \emph{Lie} polynomials (i.e., elements of a free Lie algebra) of degree $\leq4$. 
Mesyan \cite{Mes+} extends this to polynomials of degree 3 which are sums of commutators, while Buzinski and Winstanley \cite{BW13}
present an extension to multilinear sums of commutators of degree $4$. Further recent progress on images 
of multilinear polynomials is given in
 \cite{AEV15,CW16,LT16,MO16,KMR+,KMR-}.

Our main result establishes Conjecture \ref{conj:kap} for
polynomials $f$ of degree three when 
$d$ is even or $d<17$ is odd:

\begin{thm}\label{thm:main}
Let $f$ be a complex multilinear polynomial of degree three. 
\ben[label={\rm(\arabic*)}]
\item\label{it:thm1} If $d\in\N$ is even
then the image of $f$ in $M_d(\C)$ is a vector space.
 \item\label{it:thm2} If $d\in\N$  is odd and $d<17$,
then the image of $f$ in  $M_d(\C)$ is a vector space.
\een
\end{thm}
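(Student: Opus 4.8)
The plan is to study the value set $V:=\{f(A,B,C):A,B,C\in M_d(\C)\}$ through its symmetries and then force it to contain a full Lie ideal by evaluating $f$ on Sylvester's clock-and-shift matrices. Write $f=\sum_{\sigma\in\gS_3}c_\sigma\,x_{\sigma(1)}x_{\sigma(2)}x_{\sigma(3)}$. Multilinearity gives $f(\la A,B,C)=\la f(A,B,C)$, so $V$ is a cone; moreover $V$ is invariant under simultaneous $\GL_d(\C)$-conjugation and contains $0$. By the Herstein-type dichotomy quoted above, $\lspan V$ is one of $\{0\}$, $\C I_d$, $\slie_d:=M_d(\C)\cap\ker\Tr$, or $M_d(\C)$, so it suffices to identify which one and to prove that $V$ fills it.

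First I would split cases by the trace. Since $\Tr\big(x_{\sigma(1)}x_{\sigma(2)}x_{\sigma(3)}\big)$ depends only on the cyclic class of $\sigma$, one has $\Tr f(A,B,C)=\alpha\,\Tr(ABC)+\beta\,\Tr(ACB)$, where $\alpha,\beta$ are the two cyclic coefficient-sums. If $\alpha=\beta=0$ then $f$ is a sum of commutators and the statement is already known by Mesyan \cite{Mes+}; so I may assume $(\alpha,\beta)\neq(0,0)$, whence $\Tr$ is not identically zero on $V$ and $\lspan V\in\{\C I_d,M_d(\C)\}$. The possibility $\lspan V=\C I_d$ means $f$ is central-valued; this is excluded for $d\geq3$ (the minimal degree of a central polynomial far exceeds three), while $d=2$ is covered directly by \cite{KMR12} and $d=1$ is trivial. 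Hence the target is $\lspan V=M_d(\C)$, and it remains to show $V=M_d(\C)$.

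The main step uses Sylvester's clock and shift matrices $\Sigma,\Delta\in M_d(\C)$, satisfying $\Delta\Sigma=\omega\Sigma\Delta$ with $\omega=e^{2\pi i/d}$, whose products $\{\Sigma^a\Delta^b:0\le a,b<d\}$ form a basis of $M_d(\C)$. Substituting clock-shift monomials $x_i=\Sigma^{a_i}\Delta^{b_i}$ collapses all six terms of $f$ onto a single basis element,
\[
f(x_1,x_2,x_3)=\gamma\,\Sigma^{a_1+a_2+a_3}\Delta^{b_1+b_2+b_3},\qquad \gamma=\sum_{\sigma\in\gS_3}c_\sigma\,\omega^{e_\sigma},
\]
the exponents $e_\sigma$ being read off from the commutation relation. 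For fixed $x_1,x_2$ the map $L\colon x_3\mapsto f(x_1,x_2,x_3)$ is linear, so $\Image L\subseteq V$ is an honest subspace; on the basis it acts as a weighted shift $\Sigma^{a_3}\Delta^{b_3}\mapsto\gamma(a_3,b_3)\,\Sigma^{a_1+a_2+a_3}\Delta^{b_1+b_2+b_3}$, which is invertible exactly when $\gamma(a_3,b_3)\neq0$ for all $(a_3,b_3)$. Whenever this holds, $V\supseteq\Image L=M_d(\C)$ and we are done. To gain room I would allow a \emph{one-wiggle} perturbation, replacing one monomial by a one-parameter family such as $x_1(t)=\Sigma^{a_1}\Delta^{b_1}+t\,\Sigma^{a_1'}\Delta^{b_1'}$; then $\det L$ becomes a polynomial in $t$ whose value at $t=0$ is $\prod_{(a_3,b_3)}\gamma(a_3,b_3)$, and $L$ is invertible for generic $t$ as soon as this polynomial is not identically zero. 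Thus the whole theorem reduces to exhibiting clock-shift data and a wiggle direction for which $\det L\not\equiv0$.

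I expect this last nonvanishing to be the main obstacle, and it is where the even/odd dichotomy and the bound $d<17$ come from. First I would cut down the parameters: relabeling the variables permutes the $c_\sigma$, rescaling $f$ is harmless, and the transpose antiautomorphism of $M_d(\C)$ interchanges the two cyclic classes, all while preserving the property that $V$ is a subspace; this reduces $f$ to a low-dimensional normal form and leaves only finitely many potential obstructions. For even $d$ the order-two root of unity $\omega^{d/2}=-1$ should furnish a uniform pairing argument keeping some $\gamma(a_3,b_3)$, or the wiggled determinant, away from zero simultaneously at all $d^2$ points, yielding part~\ref{it:thm1} for every even $d$. For odd $d$ no such mechanism is available: the sums $\gamma$ behave like Gauss sums and can vanish sporadically, so the nonvanishing of $\det L$ must be checked separately for each odd modulus. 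This finite verification goes through for every odd $d<17$, which is precisely part~\ref{it:thm2}, with $d=17$ the first case the present method does not resolve.
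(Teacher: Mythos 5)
Your opening reductions match the paper's: the trace dichotomy, the appeal to Mesyan for the sum-of-commutators case, and the goal of showing $\Image f=M_d(\C)$ when a cyclic coefficient sum is nonzero, all via clock-and-shift substitutions that collapse $f$ to a scalar times a single basis element. But the proposal stops exactly where the proof begins. Everything after ``it remains to show $V=M_d(\C)$'' is a plan whose decisive steps are flagged as expectations (``I expect this last nonvanishing to be the main obstacle,'' ``should furnish a uniform pairing argument'') rather than carried out. The entire content of the paper's proof is the verification you defer: explicit choices of fixed monomials (the paper's ``one-wiggle families''), and a computer-assisted check that the union over families of the ``good'' coefficient regions covers the whole $4$-parameter space of polynomials --- equivalently, that the intersection of the bad affine sets $\Omega(H_j)$ is empty. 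For even $d$ this is done uniformly by Gaussian elimination over the polynomial ring in $\omega$ and checking that a short explicit list of polynomials has no primitive $d$-th root of unity as a zero; no ``pairing argument'' of the kind you gesture at appears, and nothing in your sketch would produce one. For odd $d$ the verification is done case by case, which is precisely why the theorem stops at $d<17$.

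Two further points are not merely unverified but would derail the plan as stated. First, you ask for the linear map $L:x_3\mapsto f(x_1,x_2,x_3)$ to be \emph{invertible}, i.e.\ all $d^2$ weights $\gamma(a_3,b_3)$ nonzero (possibly after a $t$-perturbation). The paper needs much less: since $\Image f$ is conjugation-invariant and $\lspan\{u^0_q,u^1_q: 0\le q<d\}$ already contains every Jordan form, it suffices that the $2d$ weights hitting $u^0_*$ and $u^1_*$ be simultaneously nonzero for a single family, and different families may be used for different regions of coefficient space. There is no evidence that your stronger requirement is attainable, and the bookkeeping that makes the weaker one work (intersecting unions of affine subsets over many families) is absent from your write-up. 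Second, for $d=3$ the method provably fails for one polynomial: the intersection of the bad sets over \emph{all} $144$ one-wiggle families is the single point corresponding to the Amitsur--Levitzki polynomial $S_3$, which must be handled by a separate explicit construction exhibiting each $3\times3$ Jordan form as a value of $S_3$. Your proposal does not anticipate any exceptional case, and your claimed reduction to ``finitely many potential obstructions'' via relabeling, rescaling and transpose is overstated --- the coefficient space remains a continuous $4$-parameter family, and those symmetries do not substitute for the case analysis.
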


The main novelty in our approach is the use of 
the clock-and-shift matrices
first utilized by Sylvester \cite{Syl82}.
These matrices are
ubiquitous in mathematical physics (cf.~\cite{Wey50}*{Chapter IV, \S15} or \cites{BFSS97,CDS98})
and endow
$M_d(\C)$ with a group--with--cocycle structure (see 
Subsection \ref{subsec:sylvester} below for details).
 Our proofs are elementary though supported by computer calculations,
 and the presentation is essentially self-contained.

\subsection{Reader's guide}
The paper is organized as follows. We collect preliminaries and introduce notation in the next section. Then in Section \ref{sec:deg3all} we prove
Theorem \ref{thm:main}.
At the onset of the section (Subsection \ref{subsec:wiggle}) we present the main idea and the general strategy for the proof.
Subsection \ref{subsec:3.3} proves Theorem \ref{thm:main} for $d=3$,
Subsection \ref{subsec:17} completes the proof of Theorem \ref{thm:main}\ref{it:thm2}. In Subsection \ref{subsec:even} the theorem is
established for $d\equiv 2 \pmod 4$, and the proof finally concludes
in Subsection \ref{subsec:4}.

\subsection*{Acknowledgments}
The authors thank 
\v Spela \v Spenko
for discussions, and 
Jurij Vol\v ci\v c for reading
a preliminary version of this paper.

\section{Preliminaries}\label{sec:prelim}

In this section we fix notation and terminology, and gather a few basic results needed later in the paper.

\subsection{Words and polynomials}
Fix $n\in\N$ and let $x=(x_1,\ldots,x_n)$ denote an $n$-tuple of freely noncommuting variables. 
We write $\ax$ for the monoid freely
generated by $x$, i.e., $\ax$ consists of \textbf{words} in the $n$
letters $x_{1},\ldots,x_{n}$
(including the \emph{empty word $\emptyset$} which plays the role of the identity).
Let $\C\ax$ denote the  corresponding free algebra; its elements
are called
noncommutative (nc) polynomials.
An element of the form $aw$ where $0\neq a\in \C$ and
$w\in\ax$ is called a \textbf{monomial} and $a$ its
\textbf{coefficient}. Hence words are monomials whose coefficient is
$1$.
The
length of the longest word in a  polynomial $f\in\C\ax$ is the
\textbf{degree} of $f$ and is denoted by $\deg( f)$. The set
of all words of degree at most $k$ is $\ax_k$, and $\C\ax_k$ is the vector
space of all  polynomials of degree at most $k$.
A polynomial $f\in\C\ax$ is called \textbf{multilinear}  if it is linear in each of the variables 
$x_j$. If $f=pq-qp$ for some $p,q\in\C\ax$, then $f$ is a {\bf commutator}.

\begin{remark}\label{rem:cyceq}
It is straightforward to see that a polynomial $f=\sum_{\de\in\ax}a_\de\de$ is a sum of commutators if and only if for each 
$\tau\in\ax$,
$$\sum_{\de\csim\tau}a_\de=0.$$
Here  words $\de,\tau\in\ax$ are called
{\bf cyclically equivalent}, $\de\csim\tau$, if one is a cyclic
permutation of the other. Equivalently, $\de-\tau$ is a commutator in $\C\ax$.
\end{remark}

\subsection{Group--with--cocycle structure in $M_d(\C)$}\label{subsec:sylvester}

We will use extensively the standard group--with--cocycle structure in $M_d(\C)$.
Let $\omega=e^{2\pi i/d}$ and for integers $p$ and $q$, let
\begin{equation}\label{eq:upq}
u^p_q=y^qv^p,
\end{equation}
where
\begin{align*}
y&=\diag(1,\omega,\omega^2,\ldots,\omega^{d-1}) \\
v&=\left(\begin{matrix}0&1 \\ &\ddots&\ddots \\ &&\ddots&1 \\ 1&&&0\end{matrix}\right)
\end{align*}
are {\bf Sylvester's clock-and-shift permutation matrices}, respectively \cite{Syl82}.
Note that the superscript on $u$ in~\eqref{eq:upq} is simply a label, not an exponent,
while the superscripts on $v$ and $y$ are exponents.
Of course $u^{r}_{s}=u^p_q$ if both $r-p$ and $s-q$ are divisible by $d$ and we have
\beq\label{eq:keyRel}
u^{p_1}_{q_1}u^{p_2}_{q_2}=\omega^{p_1q_2}u^{p_1+p_2}_{q_1+q_2}\,.
\eeq
Furthermore,
$\{u^p_q\mid p,q\in\{0,\ldots,d-1\}\}$ is a linear basis for $M_d(\C)$.

\section{Proof of Theorem \ref{thm:main}}\label{sec:deg3all}

Let
\begin{equation}\label{eq:f}
f(x_1,x_2,x_3)=\sum_{\sigma\in \gS_3}a_\sigma x_{\sigma(1)}x_{\sigma(2)}x_{\sigma(3)}
\end{equation}
for $a_\sigma\in\C$
and let $d\in\N$, $d\ge2$.
(Here $\gS_n$ is used to denote the symmetric group of permutations on
$\{1,\ldots,n\}$.)
We are interested in the image of $M_d(\C)^3$ under $f$, which we will denote $\Image f$.
Clearly, if $\sum_{\sigma\in \gS_3}a_\sigma\ne0$, then $\Image f=M_d(\C)$.
Henceforth, assume
\begin{equation}\label{eq:sum0}
\sum_{\sigma\in \gS_3}a_\sigma=0
\end{equation}
but that $a_\sigma\ne0$ for some $\sigma\in \gS_3$.
Note that for brevity, if $\sigma(1)=p$, $\sigma(2)=q$ and $\sigma(3)=r$, then we will often write $a_{pqr}$ for $a_\sigma$.
We use $\Tr$ to denote the  trace on $M_d(\C)$.

\begin{prop}[Mesyan \cite{Mes+}\footnote{We point out 
this result is proved in \cite{Mes+} for all fields with at least $d$ elements.}]\label{prop:kerTr}
$\Image f\supseteq M_d(\C)\cap\ker\Tr$.
\end{prop}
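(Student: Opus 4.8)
The plan is to prove the stated inclusion directly: given an arbitrary trace-zero matrix, I will exhibit a triple $(X_1,X_2,X_3)$ with $f(X_1,X_2,X_3)$ equal to it. Two structural features of $\Image f$ drive everything. First, since $f$ is a polynomial, $\Image f$ is invariant under simultaneous conjugation, $f(gX_1g^{-1},gX_2g^{-1},gX_3g^{-1})=g\,f(X_1,X_2,X_3)\,g^{-1}$ for $g\in\GL_d(\C)$. Second, I would invoke the classical fact that every trace-zero matrix over $\C$ is similar to a matrix with zero diagonal. Combining these, it suffices to show that $\Image f$ contains every matrix with zero diagonal, since conjugating such a matrix back recovers an arbitrary element of $M_d(\C)\cap\ker\Tr$.

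To produce zero-diagonal matrices I would feed $f$ two diagonal matrices and one free matrix: set $X_1=\diag(s_1,\ldots,s_d)$, $X_2=\diag(t_1,\ldots,t_d)$ and $X_3=C=(c_{il})$ arbitrary. Since diagonal matrices act entrywise, a short computation shows that $f(X_1,X_2,X_3)$ is the entrywise product $f_{il}=m_{il}\,c_{il}$, where
\[
m_{il}=\alpha\,s_it_i+\beta\,s_lt_l+\gamma\,s_it_l+\delta\,s_lt_i,\qquad
\alpha=a_{123}+a_{213},\ \ \beta=a_{312}+a_{321},\ \ \gamma=a_{132},\ \ \delta=a_{231}.
\]
Because $\alpha+\beta+\gamma+\delta=\sum_{\sigma}a_\sigma=0$ by \eqref{eq:sum0}, the diagonal entries $m_{ii}=(\alpha+\beta+\gamma+\delta)s_it_i$ vanish identically, so this substitution can only ever produce zero-diagonal matrices, which is exactly the class I need. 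To hit a prescribed zero-diagonal target $B=(b_{il})$ I then only need a choice of $s,t$ making $m_{il}\ne0$ for all $i\ne l$, after which $c_{il}=b_{il}/m_{il}$ (and $c_{ii}=0$) solves the problem.

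The existence of such $s,t$ is a genericity statement. For each fixed $i\ne l$ the four monomials $s_it_i,s_lt_l,s_it_l,s_lt_i$ are distinct, so $m_{il}$ is a nonzero polynomial in $s_1,\ldots,s_d,t_1,\ldots,t_d$ precisely when $(\alpha,\beta,\gamma,\delta)\ne(0,0,0,0)$. In that case the product $\prod_{i\ne l}m_{il}$ is a nonzero element of the integral domain $\C[s_1,\ldots,s_d,t_1,\ldots,t_d]$, hence does not vanish identically over the infinite field $\C$, and any point at which it is nonzero makes all the $m_{il}$ simultaneously nonzero.

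The main obstacle is the degenerate case $(\alpha,\beta,\gamma,\delta)=(0,0,0,0)$, where the substitution above collapses to $0$ because the two diagonal matrices commute and the commutator content of $f$ becomes invisible to it. Unwinding the four defining equations shows that here $f=a_{123}[x_1,x_2]x_3+a_{312}\,x_3[x_1,x_2]$ with $(a_{123},a_{312})\ne(0,0)$, and I would treat it by hand using Shoda's theorem that $[x_1,x_2]$ already realizes every trace-zero matrix. If $a_{123}+a_{312}\ne0$, taking $X_3=I_d$ gives $f=(a_{123}+a_{312})[x_1,x_2]$, so a suitable choice of $[x_1,x_2]$ yields any trace-zero matrix. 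If instead $a_{123}+a_{312}=0$, then $f=a_{123}\,[[x_1,x_2],x_3]$ is a scalar multiple of an iterated commutator; since any trace-zero $N=[P,Q]$ may be arranged with $P$ trace-zero (replace $P$ by $P-\tfrac{\Tr P}{d}I_d$), the inner commutator $[x_1,x_2]$ can be taken arbitrary in $M_d(\C)\cap\ker\Tr$ while $\{[M,C]\}$ still exhausts all trace-zero matrices. This case split is the only genuinely delicate point; everything else is either a standard invariance property or the entrywise computation above.
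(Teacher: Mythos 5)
Your argument is correct, but it reaches the zero-diagonal matrices by a genuinely different mechanism than the paper's Appendix~\ref{app:proof}. The two proofs share the opening reduction: $\Image f$ is closed under simultaneous conjugation and every trace-zero matrix is similar to a hollow matrix (the paper's Lemma~\ref{lem:0diag}), so it suffices to realize all zero-diagonal matrices. From there the paper stays inside the clock-and-shift basis: it fixes two slots at specific unitaries, e.g.\ $f(u^0_0,u^0_1,u^p_q)=(a_{123}+a_{213}+a_{231})(1-\omega^p)u^p_{q+1}$, lets the third slot run over the basis $\{u^p_q: p\ge1\}$ of hollow matrices, and splits into cases according to whether the three sums $a_{123}+a_{213}+a_{231}$, $a_{123}+a_{132}+a_{213}$, $a_{123}+a_{132}+a_{312}$ all vanish; the residual case is dispatched by further evaluations such as $f(u^0_1,u^p_q,u^0_1)=-a_{123}(1-\omega^p)^2u^p_{q+2}$. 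You instead feed $f$ two \emph{generic} diagonal matrices, observe that $f$ then acts on the third argument as an entrywise (Hadamard) multiplier whose diagonal vanishes because $\alpha+\beta+\gamma+\delta=\sum_\sigma a_\sigma=0$, and use Zariski genericity over the infinite field $\C$ to make every off-diagonal multiplier $m_{il}$ nonzero simultaneously — your check that the four monomials $s_it_i,s_lt_l,s_it_l,s_lt_i$ are distinct, so $m_{il}$ is a nonzero polynomial unless $(\alpha,\beta,\gamma,\delta)=(0,0,0,0)$, is exactly the point that makes this work. Your degenerate stratum $(\alpha,\beta,\gamma,\delta)=(0,0,0,0)$ is a different (and smaller) set of exceptional coefficients than the paper's, and you settle it by identifying $f=a_{123}[x_1,x_2]x_3+a_{312}x_3[x_1,x_2]$ and invoking Shoda's theorem, either directly (when $a_{123}+a_{312}\ne0$, via $x_3=I_d$) or through the surjectivity of the iterated commutator $[[x_1,x_2],x_3]$ onto $\ker\Tr$ (when $a_{123}+a_{312}=0$, after normalizing the inner factor to be traceless so that Shoda applies twice). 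Both routes are complete; the paper's buys uniformity with the $u^p_q$ machinery that drives the rest of Section~\ref{sec:deg3all} and produces explicit witnesses, while yours avoids roots of unity entirely and gives a cleaner conceptual case split, at the cost of relying on Shoda's theorem and on the field being infinite (the paper's footnote notes Mesyan's version works over any field with at least $d$ elements, which your genericity step would not immediately give).
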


We present in Appendix \ref{app:proof} below an alternative proof of Proposition \ref{prop:kerTr}, for the special case where the field is $\C$, that uses clock-and-shift matrices.
The proposition allows us to focus on polynomials which are
not sums of commutators:

\begin{cor}[cf.~\protect{\cite[Corollary 15]{Mes+}}]
$\Image f=M_d(\C)\cap\ker\Tr$ if and only if  
$a_{123}+a_{231}+a_{312}=0= a_{132}+a_{213}+a_{321}$.
\end{cor}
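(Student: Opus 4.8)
The plan is to read the coefficient condition on the right through Remark \ref{rem:cyceq} and then trade the claimed set equality for a single inclusion. The six words $x_{\sigma(1)}x_{\sigma(2)}x_{\sigma(3)}$ occurring in $f$ split into exactly two cyclic equivalence classes, namely $\{x_1x_2x_3,\,x_2x_3x_1,\,x_3x_1x_2\}$ and $\{x_1x_3x_2,\,x_3x_2x_1,\,x_2x_1x_3\}$. By Remark \ref{rem:cyceq}, $f$ is a sum of commutators exactly when the coefficient sum over each class vanishes, i.e.\ when $a_{123}+a_{231}+a_{312}=0=a_{132}+a_{213}+a_{321}$. Thus the right-hand side of the corollary simply asserts that $f$ is a sum of commutators.

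Next I would invoke Proposition \ref{prop:kerTr}, which gives $\Image f\supseteq M_d(\C)\cap\ker\Tr$ with no hypothesis on $f$. Consequently the asserted equality $\Image f=M_d(\C)\cap\ker\Tr$ is equivalent to the single inclusion $\Image f\subseteq\ker\Tr$, and the corollary reduces to the cleaner statement: $\Image f\subseteq\ker\Tr$ if and only if $f$ is a sum of commutators.

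For the ``if'' direction, if $f$ is a sum of commutators then every value $f(X_1,X_2,X_3)$ is a sum of matrix commutators and hence traceless, so $\Image f\subseteq\ker\Tr$. For the converse I would argue contrapositively. Setting $c:=a_{123}+a_{231}+a_{312}$, failure of the commutator condition means $c\neq0$, while \eqref{eq:sum0} forces the other cyclic sum to equal $-c$. Using cyclic invariance of the trace on each class,
$$\Tr\big(f(X_1,X_2,X_3)\big)=c\,\Tr(X_1X_2X_3)-c\,\Tr(X_1X_3X_2)=c\,\Tr\big(X_1[X_2,X_3]\big).$$
Since $d\ge2$, one can choose $X_1,X_2,X_3$ making $\Tr(X_1[X_2,X_3])\neq0$ --- for instance $X_2=E_{12}$, $X_3=E_{21}$, so that $[X_2,X_3]=E_{11}-E_{22}$, together with $X_1=E_{11}$ --- whence $\Image f\not\subseteq\ker\Tr$ and the equality fails.

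The whole argument is short, and I do not expect a serious obstacle: the only genuine bookkeeping is identifying the two cyclic classes and tracking the sign that \eqref{eq:sum0} injects into the trace formula. Once that formula is in hand, the existence of the separating triple $(X_1,X_2,X_3)$ is immediate, and the conceptual content is entirely carried by Remark \ref{rem:cyceq} and Proposition \ref{prop:kerTr}.
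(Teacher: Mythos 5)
Your proof is correct, and its skeleton coincides with the paper's: Proposition \ref{prop:kerTr} reduces the equality to the single inclusion $\Image f\subseteq\ker\Tr$, and Remark \ref{rem:cyceq} identifies the coefficient condition with $f$ being a sum of commutators. The one genuine difference is in the middle step. The paper simply cites \cite{BK09} for the equivalence ``$\Image f\subseteq M_d(\C)\cap\ker\Tr$ iff $f$ is a sum of commutators,'' whereas you prove it directly for this particular $f$: the ``if'' direction because commutators of matrices are traceless, and the ``only if'' direction by the explicit computation $\Tr\big(f(X_1,X_2,X_3)\big)=c\,\Tr\big(X_1[X_2,X_3]\big)$ (valid under the standing hypothesis \eqref{eq:sum0}, which forces the two cyclic sums to be $c$ and $-c$), followed by the witness $X_1=E_{11}$, $X_2=E_{12}$, $X_3=E_{21}$, which gives trace $c\neq0$. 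Your route buys self-containedness and avoids invoking the general Lie-ideal machinery of \cite{BK09}; the paper's citation buys brevity and a statement valid for arbitrary noncommutative polynomials rather than just multilinear cubics. Both are sound.
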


\begin{proof}
By Proposition \ref{prop:kerTr},
$\Image f=M_d(\C)\cap\ker\Tr$ if and only if
 $\Image f\subseteq M_d(\C)\cap\ker\Tr$.
 Now
 $\Image f\subseteq M_d(\C)\cap\ker\Tr$ if and only if
$f$ is a sum of commutators \cite{BK09}.
This is in turn by Remark \ref{rem:cyceq} equivalent to
$a_{123}+a_{231}+a_{312}=0=a_{132}+a_{213}+a_{321}$.
\end{proof}

\begin{remark}
The analog of Proposition  \ref{prop:kerTr} for multilinear
polynomials $f$ of degree four is presented in \cite{BW13}. It
 implies
that the image of a sum of commutators $f$ equals
$M_d(\C)\cap\ker\Tr$ when $d\geq3$. 
The case $d=2$ has a few stray cases arising from the existence of central polynomials and polynomial identities of degree four.
\end{remark}

\subsection{One-wiggle families and the general strategy}\label{subsec:wiggle}

From now on we consider $f$ as in~\eqref{eq:f} and we will always
suppose
\begin{gather}
a_{123}+a_{132}+a_{213}+a_{231}+a_{312}+a_{321}=0, \label{eq:sum02co-0} \\
a_{123}+a_{231}+a_{312}\ne0. \label{eq:sumne0}
\end{gather}
We shall show that in various cases, $\Image f=M_d(\C)$.
Without loss of generality, we assume $a_{123}+a_{231}+a_{312}=1$.
Combined with~\eqref{eq:sum02co-0}, this implies
\begin{equation}\label{eq:asubs-0}
a_{312} = 1 - a_{123} - a_{231}, \qquad
a_{321} = -1 - a_{132} - a_{213}.
\end{equation}
This leaves the four coefficient values that determine $f$:
\begin{equation}\label{eq:acoefs-0}
a_{123},\quad a_{132},\quad a_{213},\quad a_{231}.
\end{equation}

\subsubsection{One-wiggle families}
We introduce some names we will use throughout this paper.
We will plug in as variables of $f$ unitaries from the set $\{u^p_q\mid p,q\in\{0,\ldots,d-1\}\}$;
we will fix two of the three variables of $f$ and  vary the third.
Then, since $f$ is multilinear, the linear span of all matrices so obtained will be contained in the $\Image f$.
Since $\Image f$ is closed under conjugation by invertible matrices, in order to establish that $\Image f=M_d(\C)$, it will suffice
to show that all Jordan canonical forms belong to $\Image f$.
Therefore, it will be enough to show,
\begin{equation}\label{eq:desiredupq}
\big\{u^0_q\mid q\in\{0,1,\ldots,d-1\}\big\}\cup\big\{u^1_q\mid q\in\{0,1,\ldots,d-1\}\big\}\subseteq\Image f.
\end{equation}
For example, in the case $d=3$, fixing the second and third variables of $f$, we find
\begin{align}
f(u^0_2,u^2_0,u^1_1)&=(-1 + \omega^2) u^0_0 \label{eq:owf1} \\
f(u^0_0,u^2_0,u^1_1)&=((-1 - a_{213} + a_{231})  + (1 + a_{213} - a_{231}) \omega^2) u^0_1 \\
f(u^0_1,u^2_0,u^1_1)&=((-1 - a_{213})  + (a_{213} + a_{231}) \omega + (1 - a_{231}) \omega^2) u^0_2 \\
f(u^1_2,u^2_0,u^1_1)&=((-1 + a_{123} - a_{132} - a_{213})  + (a_{132} + a_{213}) \omega + (1 - a_{123}) \omega^2) u^1_0 \\
f(u^1_0,u^2_0,u^1_1)&=((-1 + a_{123} - a_{132} + a_{231})  + a_{132} \omega + (1 - a_{123} - a_{231}) \omega^2) u^1_1 \\
f(u^1_1,u^2_0,u^1_1)&=
 \begin{aligned}[t]((-1 + a_{123} - a_{132} - a_{213})  & + (a_{132} + a_{231}) \omega  \\
          & + (1 - a_{123} + a_{213} - a_{231}) \omega^2) u^1_2
 \end{aligned} \label{eq:owf6} 
\end{align}

When we can fix two of the three variables in $f$ and ``wiggle'' (or vary) the other and by so doing we obtain constant multiples of all of the $u_q^p$ appearing in~\eqref{eq:desiredupq}, then we call this a {\bf one-wiggle family}.
Each one-wiggle family $H$ produces a list of coefficients of the various $u^p_q$ appearing on the right-hand-side.
In the case of the family \eqref{eq:owf1}-\eqref{eq:owf6}, using the identity $\omega^2=-1-\omega$, these coefficients are
\begin{alignat}{4}
-2-\omega&&&& \label{eq:owfcoef1} \\
-2-\omega&&&+(-2-\omega)a_{213}&+(2+\omega)a_{231} \label{eq:owfcoef2} \\
-2-\omega&&&+(-1+\omega)a_{213}&+(1+2\omega)a_{231} \\
-2-\omega&+(2+\omega)a_{123}&+(-1+\omega)a_{132}&+(-1+\omega)a_{213}& \\
-2-\omega&+(2+\omega)a_{123}&+(-1+\omega)a_{132}&&+(2+\omega)a_{231} \\
-2-\omega&+(2+\omega)a_{123}&+(-1+\omega)a_{132}&+(-2-\omega)a_{213}&+(1+2\omega)a_{231}. \label{eq:owfcoef6}
\end{alignat}
Thus, for any choice of values $a_{123},\,a_{132},\,a_{213},\,a_{231}$ making all six of the linear expressions \eqref{eq:owfcoef1}-\eqref{eq:owfcoef6}
nonzero, we will have $\Image f=M_3(\C)$.

Continuing with our example and the one-wiggle family $H$ described in \eqref{eq:owf1}-\eqref{eq:owf6},
the coefficient \eqref{eq:owfcoef1} is never zero, while
setting each of the other coefficients \eqref{eq:owfcoef2}-\eqref{eq:owfcoef6} to be zero
results in an affine subset of dimension $3$ in the $4$-dimensional parameter space for the coefficients~\eqref{eq:acoefs-0}.
Thus, all values of the coefficients~\eqref{eq:acoefs-0} except those in the union $\Omega(H)$
of five affine subsets of dimension $3$ will yield $\Image f=M_3(\C)$.

If we consider another one-wiggle family $H'$, and thereby obtain another union $\Omega(H')$ of affine subsets of the parameter space, outside of which
we have $\Image f=M_3(\C)$,
then taking these together, every choice of values of the coefficients~\eqref{eq:acoefs-0} outside of $\Omega(H)\cap\Omega(H')$ will yield $\Image f=M_3(\C)$.

\subsubsection{The general strategy}
Our goal will be to consider sufficiently many one-wiggle families $H_1,\ldots,H_k$
so that the intersection $\bigcap_1^k\Omega(H_j)$ of the corresponding unions of affine subsets is empty, which will then imply that $\Image f=M_d(\C)$
holds for all values of coefficients~\eqref{eq:acoefs-0}.

We will achieve this goal for other cases of $M_d(\C)$
from Theorem \ref{thm:main}, but,
as we will soon see, we cannot quite attain it in the case of $M_3(\C)$.
However, we will show $\bigcap_1^k\Omega(H_j)$ is small and treat remaining cases by other, ad hoc methods.

\subsection{$d=3$}\label{subsec:3.3}

In this subsection we 
use the one-wiggle families introduced above to
establish Theorem \ref{thm:main} for $d=3$.

Recall the Amitsur-Levitzki polynomial \cites{Row80,Pro73} of degree $n$ 
defined as
\beq\label{eq:al}
S_n(x_1,\ldots,x_n) = \sum_{\sigma\in \gS_n} \sign(\sigma) x_{\sigma(1)}
x_{\sigma(2)}\cdots x_{\sigma(n)}.
\eeq
It is the minimal polynomial identity in the sense that
$S_{2n}$ is (up to a scalar multiple) the only smallest degree minimal
polynomial vanishing on
$M_n(\C)$.

\begin{lemma}\label{lem:d=3allbutone}

Suppose $f$ is not a scalar multiple of the Amitsur-Levitzki polynomial $S_3(x_1,x_2,x_3)=x_1x_2x_3+x_2x_3x_1+x_3x_1x_2-x_1x_3x_2-x_3x_2x_1-x_2x_1x_3$.
Then $\Image f= M_3(\C)$.
\end{lemma}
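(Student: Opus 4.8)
The plan is to keep the one-wiggle family $H$ of \eqref{eq:owf1}--\eqref{eq:owf6} and adjoin to it several further families, and then to show that the unique point of the parameter space \eqref{eq:acoefs-0} lying in all of the associated bad sets is the one representing $S_3$. After the normalization $a_{123}+a_{231}+a_{312}=1$ forced by \eqref{eq:sumne0}, a nonzero scalar multiple of $S_3$ must have scalar $\tfrac13$, so in the coordinates \eqref{eq:acoefs-0} it is exactly the single point $(a_{123},a_{132},a_{213},a_{231})=(\tfrac13,-\tfrac13,-\tfrac13,\tfrac13)$; conversely, $f$ is a scalar multiple of $S_3$ if and only if its normalized coefficients equal this point. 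Substituting it into \eqref{eq:owfcoef1}--\eqref{eq:owfcoef6} one checks that it makes \eqref{eq:owfcoef6} vanish while leaving the other five expressions nonzero, so it does lie in $\Omega(H)$. Thus $H$ alone cannot reach $S_3$, which both explains the exclusion in the statement and shows that $S_3$ is genuinely exceptional for this method.

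To cut $\Omega(H)$ down, further one-wiggle families are obtained by varying the choices implicit in the construction: which of the three variables of $f$ is wiggled, which fixed clock-and-shift unitaries $u^p_q$ are assigned to the two frozen slots, and which wiggle values are used to sweep out the targets in \eqref{eq:desiredupq}. Each admissible choice yields, via multilinearity and the commutation rule \eqref{eq:keyRel}, a family $H_j$ whose six output coefficients are linear forms in the coordinates \eqref{eq:acoefs-0} (the analogues of \eqref{eq:owfcoef1}--\eqref{eq:owfcoef6}); their simultaneous nonvanishing again forces $\Image f=M_3(\C)$, and $\Omega(H_j)$ is the union of the affine hyperplanes on which one of them vanishes. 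Different wiggled slots and fixed values produce linear forms that mix the four coordinates in essentially independent ways, which is what makes the ensuing intersection small.

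The crux is then a finite computation over the cyclotomic field $\mathbb{Q}(\omega)$: choosing finitely many families $H_1,\dots,H_k$ (including $H$), one verifies that $\bigcap_{j=1}^k\Omega(H_j)=\{(\tfrac13,-\tfrac13,-\tfrac13,\tfrac13)\}$. Since each $\Omega(H_j)$ is a union of hyperplanes, distributing the intersection over the unions writes $\bigcap_j\Omega(H_j)$ as the union, over all ways of selecting one hyperplane from each family, of the corresponding linear systems; for each such selection one solves the system and checks that its solution set is either empty or the single $S_3$ point. With four coordinates, four generically independent hyperplane conditions cut out a point, so the task is to pick families rich enough that every surviving branch collapses to $(\tfrac13,-\tfrac13,-\tfrac13,\tfrac13)$. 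Granting this, any $f$ whose normalized parameter point is not the $S_3$ point lies outside some $\Omega(H_j)$; that family then produces nonzero scalar multiples of all the $u^0_q$ and $u^1_q$ required in \eqref{eq:desiredupq}, whence $\Image f=M_3(\C)$.

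I expect the main obstacle to be exactly this last collapse: arranging the chosen families so that no spurious lower-dimensional component of the intersection survives beyond the forced $S_3$ point. Because $S_3$ lies in every $\Omega(H_j)$ (each family must fail on it, as the computation above for $H$ illustrates), the intersection is necessarily nonempty, so empty intersection is unattainable here and the argument is inherently tight; it must therefore be checked branch-by-branch. This is precisely the place where computer algebra is used, to confirm that the intersection is no larger than the single point, matching the claim that only scalar multiples of $S_3$ escape the conclusion $\Image f=M_3(\C)$.
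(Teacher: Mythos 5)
Your proposal follows essentially the same route as the paper: generate many one-wiggle families $H_j$ from clock-and-shift substitutions, encode each $\Omega(H_j)$ as a union of affine subsets of the four-dimensional parameter space \eqref{eq:acoefs-0}, and verify by exact computation over $\mathbb{Q}(\omega)$ that the intersection collapses to the single normalized $S_3$ point $(\tfrac13,-\tfrac13,-\tfrac13,\tfrac13)$ (the paper does this with $13$ families chosen greedily from $144$ candidates, of which $48$ give distinct $\Omega(H)$'s). Your identification of the $S_3$ point and your check that it kills exactly \eqref{eq:owfcoef6} in the family $H$ are correct, so the proposal matches the paper's argument in substance.
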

\begin{proof}
We use $13$ one-wiggle families $H_1,\ldots,H_{13}$ and we show that $\bigcap_{j=1}^{13}\Omega(H_j)$ is the single point $(\frac13,-\frac13,-\frac13,\frac13)$,
which will prove the lemma.
The proof involved computation using Mathematica~\cite{Wol}.
We will describe the proof below and also a Mathematica notebook file with more details is available from the arXiv or the authors' websites. 

The first step of the proof is to find all one-wiggle families (eliminating those where a constant multiplying a $u^p_q$ in the image is obviously zero,
based on the identity~\eqref{eq:sum02co-0}).
Computation revealed that there are $144$ different one-wiggle families.

The second step is, from each one-wiggle family $H$, to describe the associated union $\Omega(H)$, of affine subsets of $\R^4$.
Each affine subspace is given as the solution set of a linear equation, thus, as a row vector of length $5$, whose
first four entries are the coefficients of the variables~\eqref{eq:acoefs-0} and whose last entry is the constant term.
Though for a given $H$, there are ostensibly six affine subsets whose union is $\Omega(H)$,
corresponding to setting the six quantities as in~\eqref{eq:owfcoef1}-\eqref{eq:owfcoef6} to be zero. In practice one of these is never zero
and the corresponding affine subset is empty;
thus it is discarded.
Thus, each $\Omega(H)$ is encoded as a set of five nonzero row vectors of length $5$, and we normalize so that the first nonzero entry is equal to $1$.
Though there were $144$ different one-wiggle families $H$, computation revealed that they yielded only $48$ different sets $\Omega(H)$.

The third step is to find one-wiggle families $H_1,\ldots,H_k$ so that
\begin{equation}\label{eq:ptint}
\bigcap_{j=1}^k\Omega(H_j)=\{\frac13,-\frac13,-\frac13,\frac13\}.
\end{equation}
This was accomplished, with $k=13$, as follows.
Here is how we keep track of affine subsets of $\R^4$ and their unions, and how we compute intersections of the these.
(This description is for working with general $d\times d$ matrices which we need later,
though in this lemma, of course, we have $d=3$;
this is especially relevant at points~\ref{it:AlgNum} and~\ref{it:shortcuts}.)
\begin{enumerate}[label={\rm(\alph*)}]
\item\label{it:rre} affine subsets of $\R^4$ are recorded as $\ell\times 5$ matrices in reduced row-echelon form;
\item the first four columns correspond to the coefficients of the variables~\eqref{eq:acoefs-0} and the last column is for the constant terms;
\item any rows of all zeros are dropped;
\item\label{it:001} if the matrix contains the row $(0,0,0,0,1)$ then the corresponding affine space is empty and this entire matrix is discarded;
\item an $\ell\times 5$ matrix corresponds to an affine subset of dimension $4-\ell$;
\item unions of affine subsets are recorded as sets of such row-reduced matrices;
\item\label{it:intersect} the intersection of two affine subsets is computed by forming the matrix consisting
of the rows of one stacked on top of the rows of the other,
finding the reduced row-echelon form of the matrix and discarding any zero rows; if the row $(0,0,0,0,1)$ is present then the intersection is empty and
may be discarded;
\item\label{it:AlgNum} the matrix computations are carried out in exact arithmetic using Mathematica's AlgebraicNumber facility,
in terms of the primitive $d$-th root of unity;
\item\label{it:ThetaPhi} the intersection of two sets $\Theta$ and $\Phi$
 that are unions of affine subsets is computed as the union of all pairwise intersections of the affine subsets, one from $\Theta$ and one from $\Phi$.
\end{enumerate}
The search for the one-wiggle families $H_1,\ldots,H_k$ so that the intersection
$\bigcap_{j=1}^k\Omega(H_j)$ is as small as possible, is carried out as follows:
\begin{enumerate}[label={\rm(\alph*)}]
\setcounter{enumi}{9}
\item\label{it:H1} an initial one-wiggle family $H_1$ is selected
\item if $H_1,\ldots,H_p$ have been selected and the intersection $\Phi=\bigcap_{j=1}^p\Omega(H_j)$ is recorded,
then all of the remaining unselected one-wiggle families $H$ are considered, and tested for the effect on the intersection;
\item ideally, $H_{p+1}$ is chosen so that the union $\Phi\cap\Omega(H_{p+1})$  of affine subsets of $\R^4$
is as good as possible, where ``good'' means
\begin{enumerate}[label={\rm(\arabic*)}]
\item the maximal dimension of the affine subsets is as small as possible;
\item in case of a tie for the maximal dimension, the number of  different affine subsets of maximal dimension in the union
is as small as possible;
\item in case of ties for the maximal dimension and the number of subspaces of maximal dimension, the number of different affine subsets
of dimension one less than maximal is as small as possible, etc.
\end{enumerate}
\item in case of ties in the above criteria, arbitrary selections are made
\item\label{it:shortcuts} in practice it can be computationally too expensive to test all of the intersections $\Phi\cap\Omega(H)$;
instead, we sometimes
\begin{enumerate}[label={\rm(\arabic*)}]
\item consider only $\Phi_{\max}\cap\Omega(H)$, where $\Phi_{\max}$ is the union of affine subsets of maximal dimension in $\Phi$;
\item test only some of the one-wiggle families $H$, by randomly selecting them and keeping track of the properties of $\Phi\cap\Omega(H)$;
if, after testing several, sufficient improvement in the goodness of $\Phi$ is observed, then the testing is terminated and $H_{p+1}$ is selected
to be the one that was tested that resulted in the most improvement.
\end{enumerate}
\end{enumerate}

As mentioned, we found $13$ one-wiggle families 
$H_1,\ldots,H_{13}$
so that~\eqref{eq:ptint} holds with $k=13$.
Without going into full details, these are summarized in Table~\ref{tab:d=3}.
The order of their numbering is the order in which they were selected by the algorithm described in parts \ref{it:H1}-\ref{it:shortcuts} above.
\begin{table}[h]
\caption{The one-wiggle families for $d=3$}
\label{tab:d=3}
\begin{tabular}{c|c||c|c||c|c}
name & form & name & form & name & form \\ \hline
$H_{1}$ & $f(u^{1}_{0},u^{2}_{2},u^{p}_{q})$ & $H_{2}$ & $f(u^{p}_{q},u^{1}_{0},u^{0}_{2})$ & $H_{3}$ & $f(u^{0}_{1},u^{p}_{q},u^{1}_{0})$ \\ \hline
$H_{4}$ & $f(u^{1}_{0},u^{p}_{q},u^{2}_{2})$ & $H_{5}$ & $f(u^{1}_{0},u^{p}_{q},u^{1}_{1})$ & $H_{6}$ & $f(u^{2}_{0},u^{1}_{1},u^{p}_{q})$ \\ \hline
$H_{7}$ & $f(u^{p}_{q},u^{2}_{1},u^{1}_{0})$ & $H_{8}$ & $f(u^{1}_{0},u^{1}_{1},u^{p}_{q})$ & $H_{9}$ & $f(u^{1}_{0},u^{p}_{q},u^{2}_{1})$ \\ \hline
$H_{10}$ & $f(u^{2}_{0},u^{2}_{1},u^{p}_{q})$ & $H_{11}$ & $f(u^{1}_{1},u^{1}_{0},u^{p}_{q})$ & $H_{12}$ & $f(u^{p}_{q},u^{2}_{0},u^{1}_{2})$ \\ \hline
$H_{13}$ & $f(u^{2}_{0},u^{1}_{2},u^{p}_{q})$
\end{tabular}
\end{table}

\vspace{-8mm}
\end{proof}

\begin{lem}\label{lem:al3}
For the Amitsur-Levitzki polynomial $f=S_3$, we have $\Image f=M_3(\C)$.
\end{lem}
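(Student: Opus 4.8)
The plan is to treat $S_3$ as the single exceptional coefficient point left uncovered by Lemma~\ref{lem:d=3allbutone}. Dividing $S_3$ by $3$ to normalize as in~\eqref{eq:sum02co-0}--\eqref{eq:asubs-0} puts its free coefficients~\eqref{eq:acoefs-0} exactly at $(\tfrac13,-\tfrac13,-\tfrac13,\tfrac13)$, the one point of $\bigcap_j\Omega(H_j)$; so by construction every one-wiggle family has a vanishing coefficient at this point and the generic strategy cannot close. On the other hand $S_3$ satisfies $\sum_\sigma a_\sigma=0$, so Proposition~\ref{prop:kerTr} already gives $M_3(\C)\cap\ker\Tr\subseteq\Image S_3$; only the trace direction remains.

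First I would use the key relation~\eqref{eq:keyRel} to record that $S_3(u^{p_1}_{q_1},u^{p_2}_{q_2},u^{p_3}_{q_3})$ is always a scalar multiple $c\cdot u^{p_1+p_2+p_3}_{q_1+q_2+q_3}$, with $c=\sum_{\sigma\in\gS_3}\sign(\sigma)\,\omega^{p_{\sigma(1)}q_{\sigma(2)}+p_{\sigma(1)}q_{\sigma(3)}+p_{\sigma(2)}q_{\sigma(3)}}$. Because $S_3$ is multilinear, fixing two clock-and-shift inputs and letting the third range over all of $M_3(\C)$ produces an entire \emph{subspace} of $\Image S_3$, namely the coordinate subspace spanned by those $u^P_Q$ whose coefficient $c$ is nonzero. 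The point I would exploit is that, although no single family reaches all six targets of~\eqref{eq:desiredupq}, a suitable family still contributes a subspace $V\subseteq\Image S_3$ meeting the nonzero-trace hyperplane: for instance the family $f(u^1_0,u^0_1,u^p_q)$ gives $S_3(u^1_0,u^0_1,u^2_2)=3(1-\omega^2)u^0_0$, so $V\ni I$, and a short computation of the nine coefficients $c$ identifies $V$ as a concrete coordinate subspace $\C I\oplus W$ with $W\subseteq\ker\Tr$.

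With $\ker\Tr\subseteq\Image S_3$ and $V\subseteq\Image S_3$ in hand, I would finish using conjugation-invariance and scaling: a matrix $X$ with $\Tr X=0$ is handled by Proposition~\ref{prop:kerTr}, while for $\Tr X=t\neq0$ the slice $V\cap\{\Tr=t\}$ equals $\tfrac t3 I+W$, so it suffices to show that the traceless matrix $X-\tfrac t3 I$ is conjugate into $W$. Thus the whole problem reduces to the crux claim that \emph{every traceless $3\times3$ matrix is conjugate to an element of the coordinate subspace $W$} (equivalently, can be conjugated so that its $u^1_2$- and $u^2_1$-coordinates vanish); if needed one enlarges the pool of families so that the union of the resulting subspaces, closed under conjugation, visibly covers every Jordan type.

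The hard part will be precisely this last reduction. Getting the six lines of~\eqref{eq:desiredupq} into $\Image S_3$ is easy and purely computational via the formula for $c$, but lines together with $\ker\Tr$ give only $\ker\Tr\cup\C I$, which is not all of $M_3(\C)$; the genuine content is the subspace $V$ and the transversality statement that each conjugacy class of traceless matrices---including the degenerate, lower-dimensional ones---meets the $6$-dimensional $W$. I expect to settle this by an explicit normal-form analysis of the finitely many Jordan types in $M_3(\C)$---diagonalizable, one $2\times2$ block, and one $3\times3$ block---checking in each case that a representative can be conjugated into $W$, a finite verification well suited to the Mathematica computations already used in Lemma~\ref{lem:d=3allbutone}.
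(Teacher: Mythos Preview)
Your strategy is logically sound, but it takes a substantially longer path than the paper's proof, and even the endgame you sketch is essentially the same type of verification the paper performs directly.

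The paper does not use one-wiggle families at all for $S_3$. It simply observes that $\Image S_3$ is conjugation-invariant and then, for each of the three Jordan types in $M_3(\C)$, writes down an explicit triple $(A,B,C)$ with $S_3(A,B,C)$ equal to a representative of that type. In each case one of the matrices is the nilpotent $N_3$, another is diagonal or lower-triangular, and the third has entries that are simple affine functions of the Jordan parameters $\alpha,\beta,\gamma$. Three one-line identities and the lemma is done.

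By contrast, your route is: (i) invoke Proposition~\ref{prop:kerTr} for the traceless part; (ii) compute all nine coefficients of a one-wiggle family to identify the coordinate subspace $V=\C I\oplus W\subseteq\Image S_3$; (iii) reduce the nonzero-trace case to the claim that every traceless matrix is conjugate to an element of $W$; (iv) verify that claim by a case analysis over Jordan types. Step~(iv) is exactly the kind of explicit per-Jordan-type check the paper does, except that you must first pin down $W$ and then, for each Jordan form $J$, find a conjugating matrix carrying $J-\tfrac{\Tr J}{3}I$ into $W$, rather than just an evaluating triple. Your hedge (``enlarge the pool of families if needed'') is legitimate---using several subspaces $V_j\ni I$ and asking only that each conjugacy class meet $\bigcup_j W_j$---but it adds further bookkeeping, and you have not verified that any concrete $W$ actually suffices. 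So nothing is wrong, but the machinery you propose buys no simplification: once you are doing a Jordan-type case check anyway, it is shorter to exhibit the triples outright, as the paper does.
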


\begin{proof}
It suffices to prove all possible $3\times3$ Jordan forms are in the
image of $S_3$. Letting
\[
N_3=\begin{pmatrix} 0 & 1 & 0 \\ 0&0&1\\ 0&0&0\end{pmatrix},
\]
it is easy to verify that
\[
\small
\begin{split}
\begin{pmatrix}
 \al  & 0 & 0 \\
 0 & \be & 0 \\
 0 & 0 & \ga
 \end{pmatrix} & =
S_3\left(
N_3, \begin{pmatrix}
 0 & 0 & 0 \\
 2 & 0 & 0 \\
 0 & 1 & 0
 \end{pmatrix},
 \frac16
 \begin{pmatrix}
  \frac{5 \al }{2}-2 \be+\ga & 0 & 0 \\
 0 & 2 \left(\al -2 \be+\ga\right) & 0 \\
 0 & 0 & \al -2 \be-2 \ga
 \end{pmatrix}
\right),
 \\
\begin{pmatrix}
 \al  & 1 & 0 \\
 0 & \al  & 0 \\
 0 & 0 & \be
  \end{pmatrix}
&=
 S_3\left(
N_3, \begin{pmatrix}
 -1 & 0 & 0 \\
 0 & 1 & 0 \\
 0 & 0 & 2
  \end{pmatrix},
  \frac13
  \begin{pmatrix}
   3 & 0 & 0 \\
 \al  & 0 & 0 \\
 0 & \be & 0
 \end{pmatrix}
\right), \\
\begin{pmatrix}
\al  & 1 & 0 \\
 0 & \al  & 1 \\
 0 & 0 & \al 
 \end{pmatrix}
&=
S_3\left(
N_3,
  \begin{pmatrix}
   1 & 0 & 0 \\
 1 & 0 & 0 \\
 0 & 1 & -1
 \end{pmatrix},
\frac12 \begin{pmatrix}
 \al -1 & 0 & 0 \\
 0 & -2 & 0 \\
 0 & 0 & -\al -1
  \end{pmatrix}
\right). \qedhere
\end{split}
\]
\end{proof}

Now the following is a result of Lemmas~\ref{lem:d=3allbutone} and~\ref{lem:al3}.
\begin{prop}\label{prop:deg3d=3}
For any multilinear polynomial $f$ with coefficients satisfying~\eqref{eq:sum02co-0} and~\eqref{eq:sumne0}, evaluated on triples
from $M_3(\C)$, we have $\Image f=M_3(\C)$.
\end{prop}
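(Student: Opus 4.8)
The plan is to assemble Proposition~\ref{prop:deg3d=3} immediately from the two preceding lemmas by a clean dichotomy on the coefficient vector~\eqref{eq:acoefs-0}. Under the running assumptions~\eqref{eq:sum02co-0} and~\eqref{eq:sumne0}, after the normalization $a_{123}+a_{231}+a_{312}=1$ recorded in Subsection~\ref{subsec:wiggle}, the polynomial $f$ is completely determined by the four parameters~\eqref{eq:acoefs-0}. The whole parameter space therefore splits into exactly two pieces: either $f$ is a scalar multiple of the Amitsur--Levitzki polynomial $S_3$, or it is not. The goal is to verify $\Image f = M_3(\C)$ in both cases, invoking Lemma~\ref{lem:al3} for the first and Lemma~\ref{lem:d=3allbutone} for the second.

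First I would observe that Lemma~\ref{lem:d=3allbutone} covers every $f$ that is \emph{not} a scalar multiple of $S_3$, so the only remaining possibility is that $f = \lambda S_3$ for some nonzero $\lambda\in\C$. The multiplier $\lambda$ must be nonzero precisely because of~\eqref{eq:sumne0}: the normalization forces $a_{123}+a_{231}+a_{312}=1\neq 0$, so $f$ is not the zero polynomial. This is the step where the two hypotheses~\eqref{eq:sum02co-0} and~\eqref{eq:sumne0} do their work together, the former guaranteeing $f$ lies in the relevant coefficient slice and the latter ruling out the degenerate $\lambda = 0$ case.

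Next I would handle $f = \lambda S_3$ by the scaling invariance of the image. Since $\Image(\lambda g) = \lambda\,\Image(g)$ for any nonzero scalar $\lambda$ and any multilinear $g$, and since $\lambda\cdot M_3(\C) = M_3(\C)$, it follows that $\Image f = \lambda\,\Image S_3 = \lambda\cdot M_3(\C) = M_3(\C)$, where the middle equality is exactly the content of Lemma~\ref{lem:al3}. Thus both branches of the dichotomy yield $\Image f = M_3(\C)$, which is the assertion of the proposition.

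I do not expect any genuine obstacle here: the proposition is a bookkeeping corollary whose entire substance has been front-loaded into Lemmas~\ref{lem:d=3allbutone} and~\ref{lem:al3}. The only point requiring a moment's care is making the normalization explicit, namely that the reduction $a_{123}+a_{231}+a_{312}=1$ is harmless because $\Image f$ scales with $f$, so proving the result for the normalized polynomial proves it for every nonzero scalar multiple, and hence for all $f$ satisfying~\eqref{eq:sum02co-0} and~\eqref{eq:sumne0}. With that remark in place, the proof is a single sentence routing each case to the appropriate lemma.
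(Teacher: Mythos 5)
Your proposal is correct and matches the paper's proof exactly: the proposition is obtained by combining Lemma~\ref{lem:d=3allbutone} (for $f$ not a scalar multiple of $S_3$) with Lemma~\ref{lem:al3} (for $f$ a nonzero multiple of $S_3$), with the scaling observation $\Image(\lambda g)=\lambda\,\Image(g)$ handling the multiple. The paper states this in one line; your added care about the normalization and the nonvanishing of $\lambda$ is harmless elaboration of the same argument.
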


\subsection{$d<17$ odd}\label{subsec:17}

The cases of $d\times d$ matrices for $d$ odd, $5\le d\le15$, are proved similarly to the $d=3$ case,
though the method of intersecting affine subsets eliminated all values of the variables~\eqref{eq:acoefs-0} without exception.
\begin{prop}\label{prop:deg3dOdd5to13}
For any multilinear polynomial $f$ with coefficients satisfying~\eqref{eq:sum02co-0} and~\eqref{eq:sumne0}, evaluated on triples
from $M_d(\C)$, for $d$ odd and $5\le d\le 15$ and also for $d\in\{21,35\}$, we have $\Image f=M_d(\C)$.
\end{prop}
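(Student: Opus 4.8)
The plan is to extend, essentially verbatim, the computational strategy developed for the case $d=3$ in Lemma~\ref{lem:d=3allbutone}, but now carried out separately for each odd $d$ in the list $\{5,7,9,11,13,15,21,35\}$. For a fixed such $d$, I would first enumerate all one-wiggle families: for each ordered choice of a ``wiggled'' slot (one of the three variables of $f$) and each assignment of fixed matrices $u^{p_i}_{q_i}$ to the other two slots, I compute the six (or more generally $2d$) outputs $f(\dots,u^p_q,\dots)$ as the wiggled variable ranges over a spanning subset of the $u^p_q$. Using the cocycle relation~\eqref{eq:keyRel}, each such output is a scalar multiple of a single basis matrix $u^{p'}_{q'}$, and the relevant scalar is a linear form in the free coefficients~\eqref{eq:acoefs-0} with coefficients in $\mathbb{Z}[\omega]$. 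A choice of fixed matrices yields a genuine one-wiggle family precisely when, as the wiggled variable runs, one obtains nonzero multiples of all the $u^0_q$ and $u^1_q$ required by~\eqref{eq:desiredupq}; discarding the degenerate configurations (those where some scalar vanishes identically by virtue of~\eqref{eq:sum02co-0}) leaves a finite list.

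Next, for each surviving one-wiggle family $H$ I record the set $\Omega(H)$ of bad parameters: the union of the affine hyperplanes in the $4$-dimensional parameter space cut out by setting each of the nonidentically-zero scalar linear forms to zero. Following the bookkeeping in parts~\ref{it:rre}--\ref{it:ThetaPhi}, each such affine subset is stored as a reduced-row-echelon $\ell\times 5$ integer-over-$\mathbb{Z}[\omega]$ matrix, with the last column holding constant terms and an appearing row $(0,0,0,0,1)$ signalling an empty (discardable) set. Crucially, all arithmetic is exact, performed in $\mathbb{Q}(\omega)$ via Mathematica's AlgebraicNumber facility as in~\ref{it:AlgNum}; this is what makes the emptiness conclusions rigorous rather than numerical. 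I would then run the greedy selection procedure of~\ref{it:H1}--\ref{it:shortcuts}, iteratively choosing families $H_1,\dots,H_k$ so as to drive down the maximal dimension of the surviving intersection $\bigcap_{j=1}^k \Omega(H_j)$, computing each new intersection as the union of all pairwise intersections of the constituent affine subsets per~\ref{it:ThetaPhi}.

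The assertion to be verified, for each $d$ in the list, is that the procedure terminates with $\bigcap_{j=1}^k \Omega(H_j)=\emptyset$ --- that is, that at some stage the intersection of the unions of affine subsets becomes empty, which is detected by every constituent matrix acquiring a $(0,0,0,0,1)$ row. Once this holds, every parameter value~\eqref{eq:acoefs-0} lies outside some $\Omega(H_j)$, so for every $f$ there is a one-wiggle family producing nonzero multiples of all the $u^0_q$ and $u^1_q$; by multilinearity their span lies in $\Image f$, and since $\Image f$ is conjugation-invariant and all Jordan forms are conjugate into the span of~\eqref{eq:desiredupq}, we conclude $\Image f=M_d(\mathbb{C})$. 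Unlike the $d=3$ case, here \emph{no} leftover point survives, so no ad hoc argument (as in Lemma~\ref{lem:al3}) is needed; this is exactly the cleaner situation announced in the statement.

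The main obstacle is computational feasibility rather than conceptual difficulty. The number of one-wiggle families grows with $d$ (there are roughly $3\cdot d^4$ candidate fixed-matrix configurations before pruning), and the exact intersection computations in $\mathbb{Q}(\omega)$ become expensive, especially for the larger moduli $d\in\{13,15,21,35\}$; this is precisely why the shortcuts of~\ref{it:shortcuts} --- restricting attention to $\Phi_{\max}$ and randomly sampling candidate families --- are invoked. A secondary subtlety is ensuring the reduced-row-echelon arithmetic over the cyclotomic field $\mathbb{Q}(\omega)$ correctly recognizes algebraic dependencies (so that an affine subset is declared empty only when it genuinely is, and full when it genuinely is); this is guaranteed by using exact algebraic-number arithmetic rather than floating point. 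Provided these computations complete --- and the supplementary Mathematica notebook is the certificate that they do --- the proof is simply the statement that for each listed $d$ a suitable finite family $H_1,\dots,H_k$ with empty intersection was found.
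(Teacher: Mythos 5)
Your proposal is correct and follows essentially the same route as the paper: the authors likewise apply the algorithm of Lemma~\ref{lem:d=3allbutone} separately to each listed $d$, find (via exact computation in $\Q(\omega)$, certified by the accompanying Mathematica notebooks) between $6$ and $14$ one-wiggle families whose sets $\Omega(H_j)$ have empty intersection, and conclude without any leftover exceptional polynomial. The only content you could not supply blind is the explicit tabulation of the chosen families, which the paper records in Table~\ref{tab:d5to13}.
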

\begin{proof}
Using the method and algorithm described in the proof of Lemma~\ref{lem:d=3allbutone}, for each $d$ we found one-wiggle families $H_1,\ldots,H_{n}$
($n$ ranging from $6$ to $14$, depending on $d$)
such that $\bigcap_{j=1}^{n}\Omega(H_j)=\emptyset$, which provides a proof.
These one-wiggle families, numbered according to the order in which they were found by the algorithm, are described in Table~\ref{tab:d5to13}.
For those interested in more details, Mathematica 10.0 notebooks carrying out the algorithm in each case are available
from the arXiv or the authors' websites. 
\begin{table}[h]
\caption{The one-wiggle families for $d$ odd, $5\le d\le 15$ and for $d\in\{21,35\}$} \label{tab:d5to13}
\begin{tabular}{c||c|c||c|c||c|c}
$d$ & name & form & name & form & name & form \\ \hline
$d=5$
& $H_{1}$ & $f(u^{4}_{0},u^{p}_{q},u^{0}_{4})$ & $H_{2}$ & $f(u^{1}_{0},u^{4}_{4},u^{p}_{q})$ & $H_{3}$ & $f(u^{p}_{q},u^{3}_{0},u^{2}_{4})$ \\ \hline
& $H_{4}$ & $f(u^{2}_{0},u^{p}_{q},u^{4}_{1})$ & $H_{5}$ & $f(u^{4}_{0},u^{p}_{q},u^{3}_{4})$ & $H_{6}$ & $f(u^{3}_{0},u^{p}_{q},u^{1}_{3})$ \\ \hline
& $H_{7}$ & $f(u^{4}_{0},u^{p}_{q},u^{1}_{2})$ & $H_{8}$ & $f(u^{p}_{q},u^{3}_{0},u^{2}_{2})$ & $H_{9}$ & $f(u^{2}_{0},u^{1}_{4},u^{p}_{q})$ \\ \hline
& $H_{10}$ & $f(u^{2}_{0},u^{2}_{3},u^{p}_{q})$ & $H_{11}$ & $f(u^{1}_{0},u^{4}_{2},u^{p}_{q})$ & $H_{12}$ & $f(u^{3}_{0},u^{p}_{q},u^{2}_{3})$ \\ \hline
& $H_{13}$ & $f(u^{2}_{0},u^{1}_{3},u^{p}_{q})$ \\ \hline\hline
$d=7$
& $H_{1}$ & $f(u^{3}_{0},u^{2}_{1},u^{p}_{q})$ & $H_{2}$ & $f(u^{1}_{0},u^{p}_{q},u^{1}_{6})$ & $H_{3}$ & $f(u^{p}_{q},u^{5}_{1},u^{6}_{0})$ \\ \hline
& $H_{4}$ & $f(u^{6}_{0},u^{p}_{q},u^{0}_{3})$ & $H_{5}$ & $f(u^{1}_{0},u^{2}_{2},u^{p}_{q})$ & $H_{6}$ & $f(u^{p}_{q},u^{3}_{0},u^{3}_{1})$ \\ \hline
& $H_{7}$ & $f(u^{3}_{0},u^{3}_{5},u^{p}_{q})$ & $H_{8}$ & $f(u^{p}_{q},u^{5}_{0},u^{2}_{5})$ & $H_{9}$ & $f(u^{4}_{0},u^{p}_{q},u^{3}_{2})$ \\ \hline
& $H_{10}$ & $f(u^{4}_{0},u^{3}_{6},u^{p}_{q})$& $H_{11}$ & $f(u^{p}_{q},u^{3}_{0},u^{4}_{2})$ & $H_{12}$ & $f(u^{2}_{0},u^{p}_{q},u^{5}_{6})$ \\ \hline
& $H_{13}$ & $f(u^{p}_{q},u^{5}_{0},u^{2}_{1})$ & $H_{14}$ & $f(u^{1}_{0},u^{6}_{4},u^{p}_{q})$ \\ \hline\hline
$d=9$
& $H_{1}$ & $f(u^{6}_{1},u^{6}_{2},u^{p}_{q})$ & $H_{2}$ & $f(u^{3}_{0},u^{0}_{5},u^{p}_{q})$ & $H_{3}$ & $f(u^{0}_{1},u^{3}_{0},u^{p}_{q})$ \\ \hline
& $H_{4}$ & $f(u^{p}_{q},u^{6}_{1},u^{3}_{0})$ & $H_{5}$ & $f(u^{3}_{8},u^{p}_{q},u^{6}_{8})$ & $H_{6}$ & $f(u^{3}_{8},u^{6}_{8},u^{p}_{q})$ \\ \hline\hline
$d=11$
& $H_{1}$ & $f(u^{2}_{0},u^{p}_{q},u^{8}_{1})$ & $H_{2}$ & $f(u^{p}_{q},u^{1}_{0},u^{6}_{3})$ & $H_{3}$ & $f(u^{p}_{q},u^{6}_{0},u^{5}_{10})$ \\ \hline
& $H_{4}$ & $f(u^{10}_{0},u^{6}_{5},u^{p}_{q})$ & $H_{5}$ & $f(u^{1}_{0},u^{p}_{q},u^{10}_{10})$ & $H_{6}$ & $f(u^{9}_{0},u^{9}_{5},u^{p}_{q})$ \\ \hline
& $H_{7}$ & $f(u^{1}_{0},u^{7}_{3},u^{p}_{q})$ & $H_{8}$ & $f(u^{5}_{0},u^{6}_{6},u^{p}_{q})$ & $H_{9}$ & $f(u^{8}_{0},u^{p}_{q},u^{5}_{1})$ \\ \hline
& $H_{10}$ & $f(u^{8}_{0},u^{3}_{8},u^{p}_{q})$ & $H_{11}$ & $f(u^{10}_{0},u^{p}_{q},u^{1}_{1})$ & $H_{12}$ & $f(u^{p}_{q},u^{6}_{0},u^{5}_{4})$ \\ \hline\hline
$d=13$
& $H_{1}$ & $f(u^{p}_{q},u^{1}_{0},u^{11}_{11})$ & $H_{2}$ & $f(u^{9}_{0},u^{p}_{q},u^{4}_{6})$ & $H_{3}$ & $f(u^{p}_{q},u^{10}_{0},u^{3}_{1})$ \\ \hline
& $H_{4}$ & $f(u^{2}_{0},u^{9}_{1},u^{p}_{q})$ & $H_{5}$ & $f(u^{7}_{0},u^{3}_{8},u^{p}_{q})$ & $H_{6}$ & $f(u^{p}_{q},u^{7}_{0},u^{9}_{5})$ \\ \hline
& $H_{7}$ & $f(u^{p}_{q},u^{9}_{0},u^{8}_{2})$ & $H_{8}$ & $f(u^{9}_{0},u^{4}_{7},u^{p}_{q})$ & $H_{9}$ & $f(u^{p}_{q},u^{9}_{0},u^{4}_{3})$ \\ \hline
& $H_{10}$ & $f(u^{1}_{0},u^{12}_{3},u^{p}_{q})$ & $H_{11}$ & $f(u^{5}_{0},u^{p}_{q},u^{8}_{5})$
\\ \hline\hline
$d=15$
& $H_{1}$ & $f(u^{0}_{13},u^{5}_{8},u^{p}_{q})$ & $H_{2}$ & $f(u^{10}_{10},u^{p}_{q},u^{0}_{8})$ & $H_{3}$ & $f(u^{p}_{q},u^{0}_{8},u^{5}_{2})$ \\ \hline
& $H_{4}$ & $f(u^{10}_{4},u^{5}_{0},u^{p}_{q})$ & $H_{5}$ & $f(u^{p}_{q},u^{6}_{3},u^{12}_{12})$ & $H_{6}$ & $f(u^{9}_{1},u^{p}_{q},u^{6}_{10})$
\\ \hline\hline
$d=21$
& $H_{1}$ & $f(u^{p}_{q},u^{14}_{12},u^{7}_{13})$ & $H_{2}$ & $f(u^{14}_{7},u^{14}_{15},u^{p}_{q})$ & $H_{3}$ & $f(u^{14}_{7},u^{0}_{11},u^{p}_{q})$ \\ \hline
& $H_{4}$ & $f(u^{0}_{10},u^{7}_{1},u^{p}_{q})$ & $H_{5}$ & $f(u^{14}_{16},u^{p}_{q},u^{14}_{17})$ & $H_{6}$ & $f(u^{2}_{8},u^{p}_{q},u^{19}_{9})$ \\ \hline
& $H_{7}$ & $f(u^{15}_{6},u^{6}_{2},u^{p}_{q})$ 
\\ \hline\hline
$d=35$
& $H_{1}$ & $f(u^{p}_{q},u^{7}_{6},u^{28}_{0})$ & $H_{2}$ & $f(u^{p}_{q},u^{14}_{29},u^{28}_{31})$ & $H_{3}$ & $f(u^{p}_{q},u^{21`}_{17},u^{0}_{28})$ \\ \hline
& $H_{4}$ & $f(u^{0}_{27},u^{p}_{q},u^{21}_{28})$ & $H_{5}$ & $f(u^{14}_{31},u^{p}_{q},u^{0}_{29})$ & $H_{6}$ & $f(u^{29}_{33},u^{p}_{q},u^{6}_{6})$ \\ \hline
& $H_{7}$ & $f(u^{22}_{20},u^{13}_{20},u^{p}_{q})$ 
\\ \hline
\end{tabular}
\end{table}
%
%
%

\end{proof}

\smallskip

\subsection{$d$ even but not a multiple of 4}\label{subsec:even}

\begin{prop}\label{prop:d=2cOdd}
For any multilinear polynomial $f$ with coefficients satisfying~\eqref{eq:sum02co-0} and~\eqref{eq:sumne0}, evaluated on triples
from $M_d(\C)$, for $d\ge6$ even but not a multiple of $4$,
we have $\Image f=M_d(\C)$.
\end{prop}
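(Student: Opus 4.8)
The plan is to follow the one-wiggle strategy of Subsection~\ref{subsec:wiggle}: it suffices to produce, for every coefficient vector satisfying \eqref{eq:sum02co-0}--\eqref{eq:sumne0}, a \emph{single} one-wiggle family all of whose target coefficients at the matrices in \eqref{eq:desiredupq} are nonzero, since such a family spans a linear subspace of $\Image f$ that already contains $\lspan\big(\{u^0_q\}\cup\{u^1_q\}\big)$ and hence every Jordan canonical form. Equivalently, I would exhibit finitely many one-wiggle families $H_1,\dots,H_k$, now given by \emph{explicit formulas uniform in $d$} rather than by a per-$d$ search, with $\bigcap_{j=1}^k\Omega(H_j)=\emptyset$. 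Throughout I write $d=2m$ with $m$ odd, so that $\omega^{m}=\omega^{d/2}=-1$; this single identity is the structural feature distinguishing the present case from $d\equiv0\pmod4$ (treated in Subsection~\ref{subsec:4}).

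From the commutation relation \eqref{eq:keyRel} one computes $f(u^{A_1}_{B_1},u^{A_2}_{B_2},u^{A_3}_{B_3})=\big(\sum_{\sigma}a_\sigma\,\omega^{e_\sigma}\big)\,u^{A_1+A_2+A_3}_{B_1+B_2+B_3}$, where $e_\sigma=A_{\sigma(1)}B_{\sigma(2)}+A_{\sigma(1)}B_{\sigma(3)}+A_{\sigma(2)}B_{\sigma(3)}$. I would then specialize to the two-parameter family $f(u^{A}_{0},u^{0}_{D},u^{p}_{q})$: here the only frequencies of $q$ that survive are $A$ and $0$, so the wiggle coefficient collapses to $c(q)=\alpha\,\omega^{Aq}+\beta$ with $\alpha,\beta$ affine in the free coefficients \eqref{eq:acoefs-0}. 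Choosing $A$ coprime to $d$ makes $\{\omega^{Aq}:q\in\{0,\dots,d-1\}\}$ the full group $\mu_d$ of $d$-th roots of unity, so this family realizes all of $\{u^0_q\}$ (resp.\ $\{u^1_q\}$) precisely when $-\beta/\alpha$ (resp.\ the analogous ratio for the top-index-$1$ row) avoids $\mu_d$.

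This is where the oddness of $m$ enters. Since $A$ is a unit mod $d$, I can solve $AD\equiv d/2\pmod d$, giving $\omega^{AD}=-1$; substituting \eqref{eq:asubs-0} then yields, for the top-index-$0$ row, the remarkably clean $\alpha=-(1+a_{123}+a_{321})$ and $\beta=1-a_{123}-a_{321}$, so that the entire row succeeds unless the single linear functional $s_1:=a_{123}+a_{321}$ lands in the finite exceptional set $E=\{(1-\zeta)/(1+\zeta):\zeta\in\mu_d\setminus\{-1\}\}\subset i\R$. Permuting the three slots (and passing to reversed words, which swaps the companion exponent within each reversal pair) produces analogous families whose top-index-$0$ rows are governed by $s_2:=a_{132}+a_{231}$ and $s_3:=a_{213}+a_{312}$, while the top-index-$1$ rows of the same families impose further root-of-unity conditions on the remaining parameters.

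I expect the main obstacle to be proving $\bigcap_j\Omega(H_j)=\emptyset$ uniformly in $m$. Because $s_1+s_2+s_3=\sum_\sigma a_\sigma=0$ and $E=-E$, the three top-index-$0$ loci do genuinely intersect, so the $u^0$-targets alone cannot close the argument; the crux is to fold in the $u^1$-row conditions, and possibly a few further families, so that the total intersection becomes empty. Concretely, I would show that no admissible $a$ can simultaneously force $s_1,s_2,s_3\in E$ and satisfy all the surviving top-index-$1$ conditions, and this is the one point at which $d\equiv2\pmod4$ should be used decisively rather than for convenience: with $m$ odd the competing root-of-unity constraints cannot line up (e.g.\ the relevant exponents $\omega^{D}$ do not degenerate to $\pm1$), whereas for $d\equiv0\pmod4$ they can, which is precisely why that case is deferred. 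Should any residual finite verification survive this reduction, I expect it to be bounded independently of $d$ and dispatchable once and for all.
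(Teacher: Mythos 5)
Your setup is sound: the general formula for $f(u^{A_1}_{B_1},u^{A_2}_{B_2},u^{A_3}_{B_3})$ is correct, the reduction to \eqref{eq:desiredupq} is the right strategy, and your computation of $\alpha=-(1+a_{123}+a_{321})$, $\beta=1-a_{123}-a_{321}$ for the row with $p\equiv-A$ checks out (using $\omega^{AD}=-1$ and \eqref{eq:asubs-0}). But the decisive step --- showing that the intersection of the exceptional loci is empty --- is exactly the part you leave as a hope, and your choice of families makes that step substantially harder than it needs to be. Because you take $A$ coprime to $d$, the set $\{\omega^{Aq}\}$ is all of the $d$-th roots of unity, so the failure locus of a single row is the union of $d-1$ \emph{distinct} hyperplanes $\alpha\zeta+\beta=0$ in the parameter space \eqref{eq:acoefs-0}, one for each root of unity $\zeta$. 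Intersecting three such unions already produces on the order of $d^2$ affine pieces, and the residual $2$-planes you would need to kill with the top-index-$1$ rows are indexed by pairs of roots of unity. Nothing in the proposal bounds this combinatorics independently of $d$, so the closing claim that any ``residual finite verification'' is ``dispatchable once and for all'' is unsupported; as written there is a genuine gap at the crux of the argument.

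The paper's proof closes this gap by a different choice of fixed arguments: in each family of Table~\ref{tab:2co} \emph{both} fixed slots carry top index $c=d/2$ (e.g.\ $f(u^c_0,u^c_c,u^p_q)$), so by \eqref{eq:keyRel} the wiggle coefficient involves $\omega^{cq}=(-1)^q$ and depends only on the parity of $q$, not on $q$ itself. Consequently each $\Omega(H_i)$ is a union of at most three hyperplanes whose defining linear forms have coefficients that are fixed low-degree polynomials in $\omega$, uniformly in $d$ (here the second identity $\omega^{c^2}=-1$, valid precisely because $c$ is odd, is what separates this case from $d\equiv0\pmod4$). The emptiness of $\bigcap_{i=1}^8\Omega(H_i)$ then reduces to one symbolic Gaussian elimination over $\mathbb{Z}[\omega]$, valid for all such $d$ at once, modulo checking that a short explicit list of polynomials ($\omega-1$, $\omega$, $\omega+1$, $\omega^2+1$, $\omega^2-6\omega+1$, $3\omega^2-2\omega+3$) has no primitive $d$-th root of unity as a root. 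If you want to salvage your approach, the fix is to abandon $A$ coprime to $d$ and instead arrange, as the paper does, that the exponent multiplying $q$ is $c$, so that each row contributes a bounded number of affine conditions rather than $\Theta(d)$ of them.
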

\begin{proof}
We write $d=2c$ with $c$ odd.
We consider one-wiggle families.
Since $\omega$ is a primitive $d$-th root of unity and $d=2c$ with $c$ odd
we have $\omega^c=-1$ and $\omega^{c^2}=-1$.
The basic idea of the proof is similar to that of the odd cases treated in Lemma~\ref{lem:d=3allbutone} and Proposition~\ref{prop:deg3dOdd5to13},
but with simplification provided by some happy coincidences in the coefficients for certain one-wiggle families.
For example, we get
\[
f(u^c_0,u^c_c,u^0_q)=\al\,u^0_{c+q}
\]
where
\[
\al=\begin{cases}
-2 - 2 a_{132} + 2 a_{231},&q\text{ even}, \\
-2,&q\text{ odd}
\end{cases}
\]
and
\[
f(u^c_0,u^c_c,u^1_q)=\be\,u^1_{c+q}
\]
where
\[
\be=\begin{cases}
2 - 2 a_{123} + 2 a_{132} + 2 a_{213},&q\text{ even}, \\
2 - 2 a_{123} +  2 a_{213} - 2 a_{231},&q\text{ odd},
\end{cases}
\]
where we have made the substitutions~\eqref{eq:asubs-0}.
Thus, for any choice of coefficients~\eqref{eq:acoefs-0} making the three expressions
\begin{equation}\label{eq:exprfam1}
-2 - 2 a_{132} + 2 a_{231},\qquad
2 - 2 a_{123} + 2 a_{132} + 2 a_{213},\qquad
2 - 2 a_{123} +  2 a_{213} - 2 a_{231}
\end{equation}
nonzero, $\Image f=M_d(\C)$ is proved.
We will need eight such one-wiggle families, and these, named $H_1,\ldots,H_8$, are described in Table~\ref{tab:2co}.
\begin{center}
\begin{longtable}{c|l|c|l}
\caption{Values of $f$ used in the case $d=2c$ with $c$ odd} \\
\label{tab:2co}
fam. & $f$ evaluation & yields $u^*_*$ & times this quantity \\ \hline\hline
\endfirsthead
\multicolumn{4}{c}%
{\tablename\ \thetable\ -- \textit{Continued from previous page}} \\
\hline
fam. & $f$ evaluation & yields $u^*_*$ & times this quantity \\ \hline
\endhead
\hline \multicolumn{4}{r}{\textit{Continued on next page}} \\
\endfoot
\hline
\endlastfoot
\rule{0ex}{2.5ex}
$H_1$ & $f(u^c_0,u^c_c,u^0_q)$, $q$ even & $u^0_{c+q}$ & $-2 - 2 a_{132} + 2 a_{231}$ \\[0.5ex] \hline
\rule{0ex}{2.5ex}
& $f(u^c_0,u^c_c,u^0_q)$, $q$ odd & $u^0_{c+q}$ & $-2$ \\[0.5ex] \hline
\rule{0ex}{2.5ex}
& $f(u^c_0,u^c_c,u^1_q)$, $q$ even  & $u^1_{c+q}$ & $2 - 2 a_{123} + 2 a_{132} + 2 a_{213}$ \\[0.5ex] \hline
\rule{0ex}{2.5ex}
& $f(u^c_0,u^c_c,u^1_q)$, $q$ odd  & $u^1_{c+q}$ & $2 - 2 a_{123} +  2 a_{213} - 2 a_{231}$  \\[0.5ex] \hline\hline
\rule{0ex}{2.5ex}
$H_2$ & $f(u^c_1,u^c_{c+1},u^0_q)$, $q$ even & $u^0_{2+c+q}$ & $2 + 2 a_{132} - 2 a_{231}$ \\[0.5ex] \hline
\rule{0ex}{2.5ex}
& $f(u^c_1,u^c_{c+1},u^0_q)$, $q$ odd & $u^0_{2+c+q}$ & $2$ \\[0.5ex] \hline
\rule{0ex}{2.5ex}
& $f(u^c_1,u^c_{c+1},u^1_q)$, $q$ even & $u^1_{2+c+q}$ &
 $-2 \omega^2
  + (1 + \omega^2) a_{123}
  - (\omega + \omega^2) a_{132}$  \\[0.5ex]
\rule{0ex}{2.5ex}
&&& \hspace*{1em}
 $-\, (1 + \omega^2) a_{213}  
  - (\omega - \omega^2) a_{231} $ \\[0.5ex] \hline
\rule{0ex}{2.5ex}
& $f(u^c_1,u^c_{c+1},u^1_q)$, $q$ odd & $u^1_{2+c+q}$ &
 $-  2 \omega^2
   +(1 + \omega^2) a_{123} 
   + (\omega - \omega^2) a_{132} $  \\[0.5ex]
\rule{0ex}{2.5ex}
&&& \hspace*{1em}
 $-\,(1 + \omega^2) a_{213}  
   +  (\omega + \omega^2) a_{231}$
 \\[0.5ex] \hline\hline
\rule{0ex}{2.5ex}
$H_3$ & $f(u^0_q,u^c_0,u^c_c)$, $q$ even & $u^0_{c+q}$ & $-2 a_{123} - 2 a_{213} - 2 a_{231}$ \\[0.5ex] \hline
\rule{0ex}{2.5ex}
& $f(u^0_q,u^c_0,u^c_c)$, $q$ odd & $u^0_{c+q}$ & $-2$ \\[0.5ex] \hline
\rule{0ex}{2.5ex}
& $f(u^1_q,u^c_0,u^c_c)$, $q$ even & $u^1_{c+q}$ & $-2 a_{132} - 2 a_{231}$ \\[0.5ex] \hline
\rule{0ex}{2.5ex}
& $f(u^1_q,u^c_0,u^c_c)$, $q$ odd & $u^1_{c+q}$ & $-2 + 2 a_{123} - 2 a_{132} - 2 a_{213}$ \\[0.5ex] \hline\hline
\rule{0ex}{2.5ex}
$H_4$ & $f(u^c_0,u^0_q,u^c_c)$, $q$ even & $u^0_{c+q}$ & $-2 a_{123} - 2 a_{132} - 2 a_{213}$ \\[0.5ex] \hline
\rule{0ex}{2.5ex}
& $f(u^c_0,u^0_q,u^c_c)$, $q$ odd & $u^0_{c+q}$ & $2$ \\[0.5ex] \hline
\rule{0ex}{2.5ex}
& $f(u^c_0,u^1_q,u^c_c)$, $q$ even & $u^1_{c+q}$ & $-2 a_{132} - 2 a_{231}$ \\[0.5ex] \hline
\rule{0ex}{2.5ex}
& $f(u^c_0,u^1_q,u^c_c)$, $q$ odd & $u^1_{c+q}$ & $2 - 2 a_{123} + 2 a_{213} - 2 a_{231}$ \\[0.5ex] \hline\hline
\rule{0ex}{2.5ex}
$H_5$ & $f(u^0_q,u^c_{c-1},u^c_c)$, $q$ even & $u^0_{q-1}$ & $-2 a_{123} - 2 a_{213} - 2 a_{231}$ \\[0.5ex] \hline
\rule{0ex}{2.5ex}
& $f(u^0_q,u^c_{c-1},u^c_c)$, $q$ odd & $u^0_{q-1}$ & $-2$ \\[0.5ex] \hline
\rule{0ex}{2.5ex}
& $f(u^1_q,u^c_{c-1},u^c_c)$, $q$ even & $u^1_{q-1}$ &
 $-(1+ \omega^{-1}) - (1 - \omega^{-1}) a_{132} $  \\[0.5ex]
\rule{0ex}{2.5ex}
&&& \hspace*{1em}
 $-(1 - \omega^{-1}) a_{231}$ \\[0.5ex] \hline
\rule{0ex}{2.5ex}
& $f(u^1_q,u^c_{c-1},u^c_c)$, $q$ odd & $u^1_{q-1}$ &
 $-(1 - \omega^{-1})
  - 2 \omega^{-1} a_{123}$  \\[0.5ex]
\rule{0ex}{2.5ex}
&&& \hspace*{1em}
 $-\,(1 - \omega^{-1}) a_{132}
   - 2 a_{213}$  \\[0.5ex]
\rule{0ex}{2.5ex}
&&& \hspace*{1em}
 $-\, (1 + \omega^{-1}) a_{231}$ \\[0.5ex] \hline\hline 
\rule{0ex}{2.5ex}
$H_6$ & $f(u^c_1,u^0_q,u^c_0)$, $q$ even & $u^0_{q+1}$ & $2 a_{123} + 2 a_{132} + 2 a_{213}$ \\[0.5ex] \hline
\rule{0ex}{2.5ex}
& $f(u^c_1,u^0_q,u^c_0)$, $q$ odd & $u^0_{q+1}$ & $-2$ \\[0.5ex] \hline
\rule{0ex}{2.5ex}
& $f(u^c_1,u^1_q,u^c_0)$, $q$ even & $u^1_{q+1}$ &
 $-1+ \omega  + 2 a_{123}+  (1 + \omega) a_{132}$  \\[0.5ex]
\rule{0ex}{2.5ex}
&&& \hspace*{1em}
 $+  2 \omega a_{213}  +  (1 - \omega) a_{231}$ \\[0.5ex] \hline
\rule{0ex}{2.5ex}
& $f(u^c_1,u^1_q,u^c_0)$, $q$ odd & $u^1_{q+1}$ &
 $-(1+ \omega) + (1 - \omega) a_{132} + (1 - \omega) a_{231} $ \\[0.5ex] \hline\hline
\rule{0ex}{2.5ex}
$H_7$ & $f(u^c_q,u^0_1,u^c_c)$, $q$ even & $u^0_{q+c+1}$ & $2$ \\[0.5ex] \hline
\rule{0ex}{2.5ex}
& $f(u^c_q,u^0_1,u^c_c)$, $q$ odd & $u^0_{q+c+1}$ & $-2 + 2 a_{123} - 2 a_{132} - 2 a_{213}$ \\[0.5ex] \hline
\rule{0ex}{2.5ex}
& $f(u^{c+1}_q,u^0_1,u^c_c)$, $q$ even & $u^1_{q+c+1}$ &
 $1+ \omega
 -  2 a_{123} \omega 
 + (1 + \omega) a_{132}$  \\[0.5ex]
\rule{0ex}{2.5ex}
&&& \hspace*{1em}
 $+\,2 a_{213}
 + (1 - \omega)  a_{231}$ \\[0.5ex] \hline
\rule{0ex}{2.5ex}
& $f(u^{c+1}_q,u^0_1,u^c_c)$, $q$ odd & $u^1_{q+c+1}$ &
 $-(1 + \omega)
-(1 - \omega) a_{132}
-(1 - \omega) a_{231}$  \\[0.5ex]\hline\hline
\rule{0ex}{2.5ex}
$H_8$ & $f(u^0_{-1},u^c_q,u^c_0)$, $q$ even & $u^0_{q-1}$ & $-2 + 2 a_{123} - 2 a_{213} + 2 a_{231}$ \\[0.5ex] \hline
\rule{0ex}{2.5ex}
& $f(u^0_{-1},u^c_q,u^c_0)$, $q$ odd & $u^0_{q-1}$ & $2$ \\[0.5ex] \hline
\rule{0ex}{2.5ex}
& $f(u^0_{-1},u^{c+1}_q,u^c_0)$, $q$ even & $u^1_{q-1}$ &
 $-(1 + \omega^{-1})
 + 2 a_{123} 
 + (1 - \omega^{-1}) a_{132}$  \\[0.5ex]
\rule{0ex}{2.5ex}
&&& \hspace*{1em}
 $ -\, 2 \omega^{-1}  a_{213}
+  (1 + \omega^{-1}) a_{231}$ \\[0.5ex] \hline
\rule{0ex}{2.5ex}
& $f(u^0_{-1},u^{c+1}_q,u^c_0)$, $q$ odd & $u^1_{q-1}$ &
 $1 + \omega^{-1}
 - (1 - \omega^{-1}) a_{132}$  \\[0.5ex]
\rule{0ex}{2.5ex}
&&& \hspace*{1em}
 $ -\, (1 - \omega^{-1}) a_{231}$ \\[0.5ex] \hline\hline
\end{longtable}
\end{center}

\vspace{-12mm}

\noindent
As in the previous treatments, for each family $H_i$ we have a union $\Omega(H_i)$ of three affine subsets of $\R^4$, namely, the
subsets described by setting the expressions in
the last column of Table~\ref{tab:2co} equal to zero.
For example, in the case of the family $H_1$, these are the expressions~\eqref{eq:exprfam1}.
(In the case of the third and fourth rows of families $H_5$ and $H_8$, we first multiply
the entire quantity by $\omega$ to get nonnegative powers of $\omega$.)
We show that the intersection $\bigcap_{i=1}^8\Omega(H_i)$ is empty, which proves $\Image f=M_d(\C)$.
The technique is similar to that described in items~\ref{it:rre}-\ref{it:ThetaPhi} except that when performing Gaussian elimination
on matrices representing affine subsets, instead of working over the field of algebraic numbers generated by a specific primitive $d$-th
root of unity, we work over the ring of polynomials in the indeterminant $\omega$, where $\omega$ represents a primitive $d$-th root of unity
for unknown $d$,
and we are careful never to divide by a polynomial of positive degree.
With this it is only possible, in general, to reduce to a matrix in row-echelon form where the leading nonzero entry of each row
is a monic polynomial and all entries above it have strictly lower degree in $\omega$.
In particular, item~\ref{it:AlgNum} no longer applies
and items~\ref{it:001} and~\ref{it:intersect} are modified;
note that anytime
a row $(0,0,0,0,p)$ appears in a matrix representing an affine subset, where $p$ is a nonzero polynomial in $\omega$, then the affine subset
is empty provided $d$ is such that $p$ applied to a primitive $d$-th root of unity is nonzero.
Thus, items~\ref{it:001} and~\ref{it:intersect} are modified so that whenever such a row $(0,0,0,0,p)$ appears, then the matrix is discarded,
but a note is made that it is assumed $p(\omega)\ne0$.

In the end, we obtained that the intersection $\bigcap_{i=1}^8\Omega(H_i)$ is empty, provided that all in a list of polynomials in $\omega$
are nonvanishing;  the irreducible factors of the polynomials in this list are:
\[
\omega -1,\;\omega ,\;\omega +1,\;\omega ^2+1,\;\omega ^2-6 \omega +1,\;3 \omega ^2-2 \omega +3.
\]
It is easy to see that none of these polynomials vanish on the primitive $d$-th root of unity when $d\ge3$.
This proves the proposition.
The details of the calculations can be found in a Mathematica 10.0 notebook available from the arXiv or the authors' websites. 
\end{proof}

\eject

\subsection{$d$ multiple of 4}\label{subsec:4}

\begin{prop}\label{prop:d=2cEven}
For any multilinear polynomial $f$
as in \eqref{eq:f}
 with coefficients satisfying~\eqref{eq:sum02co-0} and~\eqref{eq:sumne0}, evaluated on triples
from $M_d(\C)$, for $d\ge4$ a multiple of $4$,
we have $\Image f=M_d(\C)$.
\end{prop}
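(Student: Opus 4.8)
The plan is to mirror the proof of Proposition \ref{prop:d=2cOdd}, writing $d=2c$ with $c$ now \emph{even} (equivalently $d\equiv 0\pmod 4$). As there, $\omega=e^{2\pi i/d}$ is a primitive $d$-th root of unity, so $\omega^c=-1$; the crucial difference from the $c$-odd case is that now $\omega^{c^2}=(\omega^c)^c=(-1)^c=1$, whereas $\omega^{c^2}=-1$ before. This changes which cancellations occur in the scalar $\Lambda$ appearing in a one-wiggle evaluation $f(u^a_\alpha,u^b_\beta,u^p_q)=\Lambda\,u^{a+b+p}_{\alpha+\beta+q}$, so the convenient constant entries (the $\pm2$'s) of Table \ref{tab:2co} no longer all appear, and a fresh selection of families adapted to $\omega^{c^2}=1$ is needed.

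First I would build one-wiggle families of the shapes used in Table \ref{tab:2co}: fix two of the three arguments to be clock-and-shift matrices whose superscripts sum to $0\pmod d$, and let the remaining argument range over $\{u^0_q\}$ and over $\{u^1_q\}$ for $q=0,\ldots,d-1$. By the product rule \eqref{eq:keyRel} each evaluation is a single scalar multiple of some $u^0_*$ or $u^1_*$, and as $q$ varies the subscripts sweep all residues, so these families jointly cover the target set \eqref{eq:desiredupq}. After the substitutions \eqref{eq:asubs-0}, each coefficient $\Lambda$ is a linear form in the four free parameters \eqref{eq:acoefs-0} whose coefficients are polynomials in $\omega$ (clearing negative powers as in Table \ref{tab:2co}). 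Setting these forms to zero produces the union $\Omega(H_i)$ of affine subsets of $\R^4$, and it suffices to exhibit finitely many families with $\bigcap_i\Omega(H_i)=\emptyset$.

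Next I would carry out the intersection symbolically exactly as in items \ref{it:rre}--\ref{it:ThetaPhi} of Lemma \ref{lem:d=3allbutone}, but, as in Proposition \ref{prop:d=2cOdd}, working over the polynomial ring in the \emph{indeterminate} $\omega$ standing for a primitive $d$-th root of unity, never dividing by a polynomial of positive degree, and recording at each step the finitely many polynomials $p(\omega)$ whose nonvanishing is assumed when a row $(0,0,0,0,p)$ is discarded. The conclusion $\bigcap_i\Omega(H_i)=\emptyset$ then holds for every $d\equiv 0\pmod 4$ at which none of these exceptional polynomials vanishes.

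The main obstacle is precisely this last condition at the smallest case $d=4$. There $\omega=i$, so $\omega^2+1=0$; any family whose symbolic reduction produces $\omega^2+1$ (or any other cyclotomic factor of $x^4-1$) as an assumed-nonvanishing polynomial is unusable at $d=4$. Note this factor caused no trouble for $c$ odd, since $\omega^2=-1$ forces $d=4$, which never occurs there; but here $d=4$ is in scope. The real work is therefore to engineer the family list so that, apart from the harmless units $\omega,\ \omega-1,\ \omega+1$ (nonzero for $d\ge 4$), the exceptional factors are only polynomials with no root-of-unity roots, such as $\omega^2-6\omega+1$ (real roots off the unit circle) and $3\omega^2-2\omega+3$ (unit-circle roots with $\mathrm{Re}=\tfrac13$, excluded by Niven's theorem). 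If the list cannot be cleared of the cyclotomic factors dividing $x^4-1$, then $d=4$ must be dispatched separately by an exact computation over $\C$ with $\omega=i$ fixed, using Mathematica's \texttt{AlgebraicNumber} arithmetic (item \ref{it:AlgNum}), exactly as in the odd cases $5\le d\le 15$ of Proposition \ref{prop:deg3dOdd5to13}. Once the assumed-nonvanishing polynomials avoid the cyclotomic factors of $x^4-1$ and $d=4$ is verified, the proposition follows for all $d\ge 4$ divisible by $4$.
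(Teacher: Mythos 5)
Your proposal follows essentially the same route as the paper: write $d=2c$ with $c$ even, build fresh one-wiggle families adapted to $\omega^{c^2}=1$, intersect the resulting unions of affine subsets symbolically over the polynomial ring in the indeterminate $\omega$ while recording the assumed-nonvanishing polynomials, and then treat $d=4$ separately because $\omega^2+1$ appears in that list. The paper resolves $d=4$ exactly as you anticipate in your fallback option --- it adds one extra family $H_7$ and reruns the exact computation with $\omega=i$ fixed --- so the only content you omit is the explicit list of families, which is found by computer search.
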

\begin{proof}
We write $d=2c$ with $c$ even, $c\ge2$.
We proceed as in the proof of Proposition~\ref{prop:d=2cOdd}.
Computations are summarized in Table~\ref{tab:2ce}, below.
\begin{center}
\begin{longtable}{c|l|c|l}
\caption{Values of $f$ used in the case $d=2c$ with $c$ even} \\
\label{tab:2ce}
fam. & $f$ evaluation & yields $u^*_*$ & times this quantity \\ \hline\hline
\endfirsthead
\multicolumn{4}{c}%
{\tablename\ \thetable\ -- \textit{Continued from previous page}} \\
\hline
fam. & $f$ evaluation & yields $u^*_*$ & times this quantity \\ \hline
\endhead
\hline \multicolumn{4}{r}{\textit{Continued on next page}} \\
\endfoot
\hline
\endlastfoot
\rule{0ex}{2.5ex}
$H_1$ & $f(u^c_1,u^c_c,u^0_q)$, $q$ even & $u^0_{c+q+1}$ & $2 + 2 a_{132} - 2 a_{231}$ \\[0.5ex] \hline
\rule{0ex}{2.5ex}
& $f(u^c_1,u^c_c,u^0_q)$, $q$ odd & $u^0_{c+q+1}$ & $2$ \\[0.5ex] \hline
\rule{0ex}{2.5ex}
& $f(u^c_1,u^c_c,u^1_q)$, $q$ even  & $u^1_{c+q+1}$ &
$- 2 \omega + (1 + \omega)a_{123} - (1 + \omega)a_{132}$ \\[0.5ex]
\rule{0ex}{2.5ex}
&&& \hspace*{1em}
$-\,(1 + \omega) a_{213}$ \\[0.5ex] \hline
\rule{0ex}{2.5ex}
& $f(u^c_1,u^c_c,u^1_q)$, $q$ odd  & $u^1_{c+q+1}$ &
$- 2 \omega +(1 + \omega) a_{123}  + (1 - \omega) a_{132}  $  \\[0.5ex]
\rule{0ex}{2.5ex}
&&& \hspace*{1em}
 $ -\, (1 + \omega) a_{213} +  2 \omega a_{231} $ \\[0.5ex] \hline\hline
\rule{0ex}{2.5ex}
$H_2$ & $f(u^c_1,u^0_1,u^c_q)$, $q$ even & $u^0_{q+2}$ & $-2$ \\[0.5ex] \hline
\rule{0ex}{2.5ex}
& $f(u^c_1,u^0_1,u^c_q)$, $q$ odd & $u^0_{q+2}$ & $-2 + 2 a_{123} - 2 a_{132} - 2 a_{213}$ \\[0.5ex] \hline
\rule{0ex}{2.5ex}
& $f(u^c_1,u^0_1,u^{c+1}_q)$, $q$ even & $u^1_{q+2}$ &
 $- 2 \omega^2 - (1 - \omega^2) a_{123}   + (\omega - \omega^2)a_{132} $  \\[0.5ex]
\rule{0ex}{2.5ex}
&&& \hspace*{1em}
 $+\, (1 - \omega^2) a_{213} -  (\omega - \omega^2) a_{231}$ \\[0.5ex] \hline
\rule{0ex}{2.5ex}
& $f(u^c_1,u^0_1,u^{c+1}_q))$, $q$ odd & $u^1_{q+2}$ &
 $-2 \omega^2  + (1 + \omega^2) a_{123} - (\omega + \omega^2) a_{132}$  \\[0.5ex]
\rule{0ex}{2.5ex}
&&& \hspace*{1em}
 $ -\, (1 + \omega^2) a_{213}  -(\omega - \omega^2) a_{231} $
 \\[0.5ex] \hline\hline
\rule{0ex}{2.5ex}
$H_3$ & $f(u^c_q,u^c_0,u^0_1)$, $q$ even & $u^0_{q+1}$ & $-2 a_{132} - 2 a_{231}$ \\[0.5ex] \hline
\rule{0ex}{2.5ex}
& $f(u^c_q,u^c_0,u^0_1)$, $q$ odd & $u^0_{q+1}$ & $2$ \\[0.5ex] \hline
\rule{0ex}{2.5ex}
& $f(u^{c+1}_q,u^c_0,u^0_1)$, $q$ even & $u^1_{q+1}$ & 
$-(1 - \omega) a_{123}   -(1 + \omega) a_{132}$ \\[0.5ex]
\rule{0ex}{2.5ex}
&&& \hspace*{1em}
$ - (1 - \omega) a_{213} -2 a_{231}$ \\[0.5ex] \hline
\rule{0ex}{2.5ex}
& $f(u^{c+1}_q,u^c_0,u^0_1)$, $q$ odd & $u^1_{q+1}$ &
$2 - (1 - \omega)  a_{123} +  (1 - \omega)a_{132}$  \\[0.5ex]
\rule{0ex}{2.5ex}
&&& \hspace*{1em}
$+\, (1 - \omega) a_{213}$ \\[0.5ex] \hline\hline
\rule{0ex}{2.5ex}
$H_4$ & $f(u^c_q,u^0_1,u^c_0)$, $q$ even & $u^0_{q+1}$ & $-2 a_{132} - 2 a_{231}$ \\[0.5ex] \hline
\rule{0ex}{2.5ex}
& $f(u^c_q,u^0_1,u^c_0)$, $q$ odd & $u^0_{q+1}$ & $2$ \\[0.5ex] \hline
\rule{0ex}{2.5ex}
& $f(u^{c+1}_q,u^0_1,u^c_0)$, $q$ even & $u^1_{q+1}$ &
$1+ \omega -  2 \omega a_{123}+ (1 + \omega) a_{132}$ \\[0.5ex]
\rule{0ex}{2.5ex}
&&& \hspace*{1em}
$+\, 2 a_{213} + (1 - \omega) a_{231}$ \\[0.5ex] \hline
\rule{0ex}{2.5ex}
& $f(u^{c+1}_q,u^0_1,u^c_0)$, $q$ odd & $u^1_{q+1}$ &
$-(1+ \omega) - (1 - \omega) a_{132} - (1 - \omega) a_{231}  $ \\[0.5ex] \hline\hline
\rule{0ex}{2.5ex}
$H_5$ & $f(u^0_q,u^c_0,u^c_1)$, $q$ even & $u^0_{q+1}$ & $-2 a_{123} - 2 a_{213} - 2 a_{231}$ \\[0.5ex] \hline
\rule{0ex}{2.5ex}
& $f(u^0_q,u^c_0,u^c_1)$, $q$ odd & $u^0_{q+1}$ & $-2$ \\[0.5ex] \hline
\rule{0ex}{2.5ex}
& $f(u^1_q,u^c_0,u^c_1)$, $q$ even & $u^1_{q+1}$ &
 $-(1 + \omega)a_{123} -(1 - \omega) a_{132}$ \\[0.5ex]
\rule{0ex}{2.5ex}
&&& \hspace*{1em}
$-\,(1 + \omega) a_{213}-2 a_{231} $ \\[0.5ex] \hline
\rule{0ex}{2.5ex}
& $f(u^1_q,u^c_0,u^c_1)$, $q$ odd & $u^1_{q+1}$ &
$-2 + (1 - \omega) a_{123} -(1 - \omega) a_{132} $ \\[0.5ex]
\rule{0ex}{2.5ex}
&&& \hspace*{1em}
$ -\,(1 - \omega)   a_{213}$ \\[0.5ex] \hline\hline
\rule{0ex}{2.5ex}
$H_6$ & $f(u^c_0,u^0_q,u^c_1)$, $q$ even & $u^0_{q+1}$ & $-2 a_{123} - 2 a_{132} - 2 a_{213}$ \\[0.5ex] \hline
\rule{0ex}{2.5ex}
& $f(u^c_0,u^0_q,u^c_1)$, $q$ odd & $u^0_{q+1}$ & $2$ \\[0.5ex] \hline
\rule{0ex}{2.5ex}
& $f(u^c_0,u^1_q,u^c_1)$, $q$ even & $u^1_{q+1}$ &
 $ -(1 + \omega) a_{123} -2 a_{132} -(1 + \omega) a_{213}$ \\[0.5ex]
\rule{0ex}{2.5ex}
&&& \hspace*{1em}
$-\,(1 - \omega) a_{231} $  \\[0.5ex] \hline
\rule{0ex}{2.5ex}
& $f(u^c_0,u^0_q,u^c_1)$, $q$ odd & $u^1_{q+1}$ &
 $2 - (1 - \omega) a_{123} + (1 - \omega) a_{213} $  \\[0.5ex]
\rule{0ex}{2.5ex}
&&& \hspace*{1em}
$-\, (1 - \omega)a_{231}$ \\[0.5ex]\hline
\end{longtable}
\end{center}

\vspace{-5mm}

\noindent
Proceeding as described in the proof of Proposition~\ref{prop:d=2cOdd}, we show that the intersection $\bigcap_{i=1}^6\Omega(H_i)$
is empty provided none of the polynomials
\begin{multline*}
\omega -3,\;\omega -1,\;\omega ,\;\omega +1,\;3 \omega -1,\;\omega ^2+1,\;\omega ^2-4 \omega +1,\\
\omega ^2+4 \omega +1,\;3 \omega ^2-2 \omega
 +3,\;\omega ^4-2 \omega ^3+10 \omega ^2-2 \omega +1
 \end{multline*}
 vanish and it is easy to show that none of these vanish when $\omega$ is a primitive $d$-th root of unity for $d\ge5$.
However, in case $d=4$, namely $\omega=i$, the polynomial $w^2+1$ does, of course, vanish.

It remains to consider the case $\omega=i$.
For this case, we need to add one more one-wiggle family, $H_7$, presented in Table~\ref{tab:2ce7th}.
\begin{table}[!htbp]
\caption{The extra one-wiggle family used in the case $d=4$}
\begin{tabular}{c|l|c|l}
\label{tab:2ce7th}
fam. & $f$ evaluation & yields $u^*_*$ & times this quantity \\ \hline\hline
$H_7$ & $f(u^c_0,u^0_1,u^c_q)$, $q$ even & $u^0_{q+1}$ & $2 - 2 a_{123} + 2 a_{132} + 2 a_{213}$ \\[0.5ex] \hline
\rule{0ex}{2.5ex}
& $f(u^c_0,u^0_1,u^c_q)$, $q$ odd & $u^0_{q+1}$ & $2$ \\[0.5ex] \hline
\rule{0ex}{2.5ex}
& $f(u^c_0,u^0_1,u^{c+1}_q)$, $q$ even & $u^1_{q+1}$ &
 $2 \omega - (1 + \omega) a_{123} + 2 \omega a_{132}$  \\[0.5ex]
\rule{0ex}{2.5ex}
&&& \hspace*{1em}
 $ +\, (1 + \omega) a_{213} + (1 - \omega) a_{231}$ \\[0.5ex] \hline
\rule{0ex}{2.5ex}
& $f(u^c_0,u^0_1,u^{c+1}_q)$, $q$ odd & $u^1_{q+1}$ &
$2 \omega + (1 - \omega) a_{123} - (1 - \omega) a_{213} $  \\[0.5ex]
\rule{0ex}{2.5ex}
&&& \hspace*{1em}
$ +\, (1 - \omega) a_{231}$  \\[0.5ex] \hline\hline
\end{tabular}
\end{table}
Substituting $\omega=i$ and executing the algorithm as described in the proof of Lemma~\ref{lem:d=3allbutone},
we easily show $\bigcap_{i=1}^7\Omega(H_i)=\varnothing$ in this case, which finishes the proof.
\end{proof}


\appendix

\section{Alternative Proof of Proposition \ref{prop:kerTr}}\label{app:proof}

In this appendix we present a short and self-contained proof
of Proposition \ref{prop:kerTr}. We start by recording a well-known lemma
stating that every traceless matrix is unitarily similar to a hollow matrix;
see e.g.~\cite{HJ85}*{Chapter 2, Section 2.2, Problem 3} for a proof.

\begin{lemma}\label{lem:0diag}
Given $A\in M_d(\C)$ with $\Tr(A)=0$,
there exists a unitary $U\in \GL_d(\C)$ so that $U^{-1}AU$ has  only zeros on the diagonal.
\end{lemma}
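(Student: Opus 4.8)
The plan is to prove the statement by induction on $d$, where the crux of the inductive step is to locate a single unit vector $v \in \C^d$ with $\langle Av, v\rangle = 0$; once such a $v$ is found, making it the first member of an orthonormal basis forces the $(1,1)$ entry to vanish, and the problem reduces to a traceless matrix of size $d-1$.

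To produce the vector $v$, I would invoke the \emph{field of values} (numerical range) $W(A) = \{\langle Av, v\rangle : v \in \C^d,\ \|v\| = 1\}$. Each standard basis vector $e_i$ contributes the diagonal entry $a_{ii} = \langle A e_i, e_i\rangle \in W(A)$, and these $d$ points average to $\frac{1}{d} \Tr(A) = 0$. The decisive input is the Toeplitz--Hausdorff theorem, which asserts that $W(A)$ is a convex subset of $\C$; since $0$ is a convex combination of the points $a_{ii} \in W(A)$, convexity yields $0 \in W(A)$, i.e.\ there is a unit vector $v$ with $\langle Av, v\rangle = 0$.

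For the induction itself, the base case $d = 1$ is trivial, as a traceless $1 \times 1$ matrix is $0$. For the step, extend $v$ to an orthonormal basis $v = v_1, v_2, \ldots, v_d$ and let $W_0$ be the unitary whose columns are these vectors; then the $(1,1)$ entry of $W_0^{-1} A W_0$ is $\langle Av, v\rangle = 0$, and writing this matrix in $1 + (d-1)$ block form, its lower-right $(d-1) \times (d-1)$ block $A'$ satisfies $\Tr(A') = \Tr(A) - 0 = 0$. By the inductive hypothesis there is a unitary $V \in \GL_{d-1}(\C)$ with $V^{-1} A' V$ hollow; conjugating by the block-diagonal unitary $\diag(1, V)$ fixes the first coordinate, leaves the $(1,1)$ entry at $0$, and replaces $A'$ by the hollow matrix $V^{-1} A' V$. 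The composite unitary $U = W_0 \, \diag(1,V)$ then satisfies the claim.

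The only genuinely substantive ingredient is the convexity of $W(A)$; everything else is bookkeeping, so I expect that to be the main obstacle, although it is a standard and short theorem. In a self-contained write-up I would either cite Toeplitz--Hausdorff directly or replace it by the elementary observation that a unitary conjugation acting in the two-dimensional span of $e_i, e_j$ can equalize any two diagonal entries to their average, and iterate this averaging to drive all diagonal entries simultaneously to $\frac{1}{d}\Tr(A) = 0$, bypassing both the numerical range and the induction.
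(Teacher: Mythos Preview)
The paper does not actually prove this lemma; it merely records it as well known and cites Horn--Johnson, \emph{Matrix Analysis}, Chapter~2, Section~2.2, Problem~3. Your main argument --- find a unit vector $v$ with $\langle Av,v\rangle=0$ via the Toeplitz--Hausdorff convexity of the numerical range, then induct on $d$ --- is correct and is precisely the standard proof indicated in that reference, so there is nothing to compare.

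One small caveat about your closing remark. The alternative you sketch, repeatedly equalizing two diagonal entries to their common average by a $2\times2$ unitary rotation, does not obviously terminate: given complex numbers $a_1,\ldots,a_d$ summing to zero, replacing a pair by their midpoint need not reach the all-zeros tuple in finitely many steps (already for $d=3$ the process can oscillate indefinitely). If you want a route that genuinely avoids the full Toeplitz--Hausdorff theorem, the clean fix is to use only the $2\times2$ case of Toeplitz--Hausdorff (which is an easy direct computation) inside your inductive step: since $\sum a_{ii}=0$, either all $a_{ii}=0$ or there exist $i,j$ with $0$ lying on the segment $[a_{ii},a_{jj}]$ after a preliminary real rotation --- actually, even simpler, just observe that the $2\times2$ numerical range already gives a unit vector $v\in\lspan\{e_i,e_j\}$ with $\langle Av,v\rangle$ equal to any prescribed convex combination of $a_{ii}$ and $a_{jj}$, and then run your original induction using that in place of the $d$-dimensional convexity. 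Either way, your primary Toeplitz--Hausdorff argument is sound and suffices.
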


\begin{proof}[Proof of Proposition \ref{prop:kerTr}]

Since 
\begin{equation}\label{eq:spanupq0diag}
\lspan\big\{u^p_q\mid p\in\{1,2,\ldots,d-1\},\,q\in\{0,1,\ldots,d-1\}\big\}
\end{equation}
equals the set of matrices in $M_d(\C)$ having all diagonal entries zero,
in light of Lemma~\ref{lem:0diag}, it will suffice to show that $\Image f$ contains the space~\eqref{eq:spanupq0diag}.
Using~\eqref{eq:sum0}, we have
\begin{align}
f(u^0_0,u^0_1,u^p_q)&=\big((a_{123}+a_{213}+a_{231})+(a_{132}+a_{312}+a_{321})\omega^p\big)u^p_{q+1} \notag \\
&=(a_{123}+a_{213}+a_{231})(1-\omega^p)u^p_{q+1} \label{eq:fuuusum1}
\end{align}
and, similarly,
\begin{align*}
f(u^0_1,u^0_0,u^p_q)&=(a_{123}+a_{132}+a_{213})(1-\omega^p)u^p_{q+1} \\
f(u^0_1,u^p_q,u^0_0)&=(a_{123}+a_{132}+a_{312})(1-\omega^p)u^p_{q+1}\,.
\end{align*}
Suppose that at least one of the three quantities
\begin{equation}\label{eq:sums3}
a_{123}+a_{213}+a_{231}\,,\qquad
a_{123}+a_{132}+a_{213}\,,\qquad
a_{123}+a_{132}+a_{312}
\end{equation}
is nonzero.
If the first is nonzero, then using~\eqref{eq:fuuusum1} and~\eqref{eq:spanupq0diag} we get
\begin{multline*}
f(u^0_0,u^0_1,\lspan\{u^p_q\mid p\in\{1,2,\ldots,d-1\},\,q\in\{0,1,\ldots,d-1\}\})= \\
\begin{aligned}
&=\lspan\{f(u^0_0,u^0_1,u^p_q)\mid p\in\{1,2,\ldots,d-1\},\,q\in\{0,1,\ldots,d-1\}\}= \\
&=\lspan\{u^p_{q+1}\mid p\in\{1,2,\ldots,d-1\},\,q\in\{0,1,\ldots,d-1\}\},
\end{aligned}
\end{multline*}
and the last of these is the set of all matrices having zero diagonal.
By Lemma \ref{lem:0diag},
the union of the similarity orbits (in fact, the unitary orbits) of these matrices is
$M_d(\C)\cap\ker\Tr$.
The cases of the  other expressions in~\eqref{eq:sums3} being nonzero are handled in a like manner.

Hence, we may suppose that all three expressions in~\eqref{eq:sums3} vanish.
Then, using also~\eqref{eq:sum0}, we get
\begin{equation}\label{eq:asolns1}
a_{213}=-a_{123}-a_{132}\,,\quad a_{231}=a_{132}\,,\quad
a_{312}=-a_{123}-a_{132}\,,\quad a_{321}=a_{123}\,.
\end{equation}
We find
\begin{align*}
f(u^0_1,u^p_q,u^0_1)
&=\big((a_{123}+a_{321})\omega^p+(a_{132}+a_{312})+(a_{213}+a_{231})\omega^{2p}\big)u^p_{q+2} \\
&=-a_{123}(1-\omega^p)^2u^p_{q+2},
\end{align*}
using~\eqref{eq:asolns1} for the second equality,
and similarly
\[
f(u^0_1,u^0_1,u^p_q)
=-a_{132}(1-\omega^p)^2u^p_{q+2}\,.
\]
Arguing as before, if $a_{123}\ne0$ or $a_{132}\ne0$, then
the set~\eqref{eq:spanupq0diag} is in $\Image f$.
But we cannot have $a_{123}=a_{132}=0$, for this together with~\eqref{eq:asolns1}
would imply that $f$ is identically zero, contrary to hypothesis.
\end{proof}


\end{document}